
\documentclass[letterpaper, 10 pt, conference]{ieeeconf}  

\IEEEoverridecommandlockouts                              

\overrideIEEEmargins                                      


\usepackage{graphics} 
\usepackage{epsfig} 
\usepackage{amsmath} 
\usepackage{amssymb}  
\usepackage{amsthm}
\usepackage{enumerate}

\usepackage{arydshln}

\usepackage{caption}
\usepackage{float}
\floatstyle{boxed}
\restylefloat{figure}

\title{\LARGE \bf
$\hf$ control problem for general discrete--time systems*}

\author{Sebastian F. Tudor$^{1}$, Cristian Oar\ua$^{2}$ and  \c Serban Sab\ua u$^{1}$
\thanks{*This work was supported by a grant of the Romanian National Authority for Scientific Research, CNCS -- UEFISCDI, PN-II-ID-PCE-2011-3-0235.}
\thanks{$^{1}$Sebastian Florin Tudor and \c Serban Sab\ua u are with the Department of Electrical and Computer Engineering, Stevens Institute of Technology, Hoboken, NJ 07030, USA. The first author is on leave from "Politehnica" University of Bucharest, Romania. }%
\thanks{$^{2}$Cristian Oar\ua\ is with the Department of Automatic Control and Systems Engineering, Faculty of Automatic Control and Computer Science, "Politehnica" University of Bucharest, Romania.
        }%
\thanks{E-mails: {\tt\small studor@stevens.edu}, {\tt\small ssabau@stevens.edu}, {\tt\small cristian.oara@acse.pub.ro} (corresponding author).}%
}

\begin{document}

\theoremstyle{plain}
\newtheorem{thm}{Theorem}
\newtheorem{prop}[thm]{Proposition}
\newtheorem{cor}[thm]{Corollary}
\newtheorem{lem}[thm]{Lemma}
\newtheorem{df}[thm]{Definition}
\theoremstyle{definition}
\newtheorem{rem}[thm]{Remark}

\newcommand{\R}{\mathbb{R}} 
\newcommand{\C}{\mathbb{C}}
\newcommand{\Z}{\mathbb{Z}}
\newcommand{\F}{\mathbb{F}}
\newcommand{\N}{\mathbb{N}}
\newcommand{\Di}{\mathbb{D}}
\newcommand{\pDi}{\partial\mathbb{D}}

\newcommand{\w}{\omega}
\newcommand{\D}{\Delta}
\newcommand{\x}{\times }
\newcommand{\jw}{ j\omega }
\newcommand{\ejt}{ e^{j\theta} }
\newcommand{\sg}{\sqrt\gamma}

\newcommand{\Sig}{\Sigma}
\newcommand{\si}{\sigma}
\newcommand{\al}{\alpha}
\newcommand{\ab}{\bar\alpha}
\newcommand{\ap}{\alpha^2}
\newcommand{\Lam}{\Lambda}
\newcommand{\T}{\textbf{T}}
\newcommand{\G}{\textbf{G}}
\newcommand{\K}{\textbf{K}}
\newcommand{\Q}{\textbf{Q}}
\newcommand{\fP}{{\bf\Pi_\Sigma}}
\newcommand{\zet}{z}

\newcommand{\ai}{\^{a}}
\newcommand{\tz}{\c{t}}
\newcommand{\sh}{\c{s}}
\newcommand{\ii}{\^{\i}}
\newcommand{\II}{\^{I}}
\newcommand{\ua}{\u{a}}
\newcommand{\Sh}{\c{S}}

\newcommand{\ba}{\left[ \begin{array}}
\newcommand{\baa}{\begin{array}}
\newcommand{\ea}{\end{array} \right]}
\newcommand{\eaa}{\end{array}}

\newcommand{\be}{\begin{equation}}
\newcommand{\ee}{\end{equation}}
\newcommand{\bel}{\begin{equation}\label}
\newcommand{\rf}[1]{(\ref{#1})}

\newcommand{\ov}{\overline}
\renewcommand{\tilde}{\widetilde}
\newcommand{\ti}{\widetilde}
\newcommand{\wtilde}{\widetilde}
\newcommand{\what}{\widehat}
\renewcommand{\hat}{\widehat}
\renewcommand{\bar}{\ov}

\newcommand{\dd}{{\mathrm{d}}}
\renewcommand{\d}{{\mathrm{d}}}
\newcommand{\trace}{{\mathrm{Trace}}\,}
\newcommand{\Sum}{{\sum}}
\newcommand{\no}{{\mathrm{no}}\,}
\newcommand{\Ker}{{\mathrm{Ker}}\,}
\renewcommand{\ker}{\Ker}
\newcommand{\sgn}{{\mathrm{sgn}}\,}
\newcommand{\diag}{{\mathrm{diag}}\,}
\renewcommand{\Im}{{\mathrm{Im}}\,}
\newcommand{\Ricc}{Ricc}
\newcommand{\rank}{{\mathrm{rank}}\,}
\newcommand{\LFT}{{\mathrm{LFT}}}
\newcommand{\TLFS}{{\mathrm{TLFS}}}
\newcommand{\TLFI}{{\mathrm{TLFI}}}

\newcommand{\calRLinf}{{\cal R}{\cal L}_{\infty}}
\newcommand{\CC}{{{\mbox{\rm \hspace*{0.05ex}
\rule[.18ex]{.18ex}{1.24ex} \kern -.65em C}}}}
\newcommand{\rhinf}{{\cal R \cal H}^\infty}
\newcommand{\hinf}{{\cal  \cal H}^\infty}

\newcommand{\Le}{{{\cal  L}}_e}
\newcommand{\script}[1]{\EuScript{#1}}
\newcommand{\range}{\operatorname{Range}}
\newcommand{\proj}{\operatorname{proj}}
\newcommand{\jac}{\operatorname{Jac}}
\newcommand{\zip}{\operatorname{zip}}
\newcommand{\CRICOP}{{\mathrm{CRICOP}}}
\newcommand{\DRICOP}{{\mathrm{DRICOP}}}
\newcommand{\inertia}{\operatorname{In\,}}
\newcommand{\Gl}{\operatorname{Gl\,}}
\newcommand{\real}{\operatorname{Re}}
\newcommand{\col}{\operatorname{col}}


\newcommand{\rL}{RL^\infty}
\newcommand{\rHp}[1]{{RH^\infty_{+ #1}}}
\newcommand{\rHm}[1]{{RH^\infty_{- #1}}}

\newcommand{\hf}{\mathcal{H}_\infty}
\newcommand{\rhf}{\mathcal{RH}_\infty}
\newcommand{\bhf}{\mathcal{BH}_\infty^{(\gamma)}}
\newcommand{\bhfu}{\mathcal{BH}_\infty^{(1)}}
\newcommand{\hdoi}{\mathcal{H}_2}

\newenvironment{matr}[1]{\left[ \begin{array}{#1}}{\end{array} \right]}
\renewenvironment{proof}{{\noindent \bf Proof.}}
{\hfill \mbox{}\hfill  \rule{2.5mm}{2.5mm}\\[+1mm]}
\def\Re{\mathop{\mathrm{Re}}}
\def\Im{\mathop{\mathrm{Im}}}
\def\esssup{\mathop{\mathrm ess sup}}
\maketitle
\thispagestyle{empty}
\pagestyle{empty}

\begin{abstract}
This paper considers the $\hf$ control problem for a general discrete--time system, possibly improper or polynomial. The parametrization of suboptimal $\hf$ output feedback controllers is presented in a realization--based setting, and it is given in terms of two descriptor Riccati equations. Moreover, the solution features the same elegant simplicity of the proper case. An interesting numerical example is also included.


\end{abstract}

\section{INTRODUCTION}

Ever since it emerged in the 1980's in the seminal paper of Zames \cite{zam81}, the $\hf$ control problem (also known as the disturbance attenuation problem) has drawn much attention, mainly due to the wide range of control applications. It is one of the most celebrated problems in the control literature, since it can be approached from diverse technical backgrounds, each providing its own interpretation.  

The design problem is concerned with finding the class of controllers, for a given system, that stabilizes the closed--loop system and makes its input--output $\hf$--norm bounded by a prescribed threshold. Various mathematical techniques were used, e.g., Youla parametrization, Riccati--based approach, linear matrix inequalities, to name just a few.   

The original solution involved analytic functions (NP interpolation) or operator theory \cite{sar67,aak71}. For good surveys on the classical topics we refer to \cite{fd87,fra87}. Notable contributions to the state--space solution for the $\hf$ control problem are due to \cite{dgkf,tad90,sto92}. An algebraic technique using a chain scattering approach is presented in \cite{kimura}. The solution of the $\hf$ control problem in discrete--time setting is given in \cite{IW93}. 

More recently, $\hf$ controllers for general continuous--time systems (possibly improper or polynomial) were obtained. An extended model matching technique was employed in \cite{takaba94}. A solution expressed in terms of two generalized algebraic Riccati equations is given in \cite{wang}. A matrix inequality approach was considered in \cite{lmi97}. Note that a {\it dicrete--time} solution is still missing.  

General systems cover a wide class of physical systems, e.g. non--dynamic algebraic constraints (differential--algebraical systems), impulsive behavior in circuits with inconsistent initial conditions \cite{tolsa}, and hysteresis, to name just a few. Cyber--physical systems under attack, mass/gas transportation networks, power systems and advanced communication systems can also be modeled as improper systems \cite{bullo13}. The wide range of applications of improper systems spans topics from engineering, e.g. aerospace industry, robots, path prescribed control, mechanical multi--body systems, network theory \cite{gunther99,rabier00,Lind2002765}, to economics \cite{lue77}.

Motivated by this wide applicability and interest shown in the literature for improper systems, we extend in this paper the $\hf$ control theory for general {\it discrete--time systems} using a novel approach, based on Popov's theory \cite{popov} and on the results in \cite{genricc}. A realization--based solution is provided, using a novel type of algebraic Riccati equation, investigated in \cite{oara2013ricc}. Our solution exhibits a numerical easiness similar with the proper case and can be seen as a straightforward generalization of \cite{dgkf}.

The paper is organized as follows. In Section II we give some preliminary results. In Section III we state the suboptimal $\hf$ output feedback control problem. We provide in Section IV the main result, namely realzation--based formulas for the class of all stabilizing and contracting controllers for a general discrete--time transfer function matrix (TFM). In order to show the applicability of our results, we present in Section V an interesting numerical example. The paper ends with several conclusions. We defer all the proofs to the Appendix.


\section{PRELIMINARIES}
We denote by $\C,\ \Di$, and $\partial\Di$ the complex plane, the open unit disk, and the unit circle, respectively. Let  $\bar\C=\C\cup\{\infty\}$ be the one--point compactification of the complex plane. Let $z\in\C$ be a complex variable. $A^*$ stands for the conjugate transpose of a complex matrix $A\in\C^{m\times n}$;  $A^{-1}$ denotes the inverse of $A$, and $A^{1/2}$ is such that $A^{1/2}A^{1/2}=A$, for $A$ square. The union of generalized eigenvalues (finite and infinite, multiplicities counting) of the matrix pencil $A-zE$ is denoted with $\Lam(A-zE)$, where $A,E\in\C^{n\times n}$. By $\C^{p\times m}(z)$ we denote the set of $p\x m$ TFMs with complex coefficients. $\rhf$ stands for the set of TFMs analytic in $\bar\C\backslash\Di.$ The Redheffer product is denoted with $\otimes$.

To represent an improper or polynomial discrete--time system $\G\in\C^{p\x m}(z)$, we will use a general type of realization called {\it centered}:
\bel{realc}
\G(z)=D+C(z E-A)^{-1}B(\al- \beta z)
 =: \ba{c|c}
A-z E & B\\
\hline
C & D\ea_{z_0},
\ee 
where $z_0=\al/\beta\in\C$ is fixed, $n$ is called the order (or the dimension) of the realization, $A,E\in\C^{n\x n}$, $B\in\C^{n\x m}$, $C\in\C^{p\x n}$, $D\in\C^{p\x m}$, $\rank E\le n$, and the matrix pencil $A-z E$ is regular, i.e., $\det(A-zE)\not\equiv0$. Note that for $\al=1$ and $\beta=0$ we recover the well--known descriptor realization \cite{dai89} for an improper system, centered at $z_0=\infty$. We call the realization \rf{realc} {\it minimal} if its order is as small as possible among all realizations of this type.

Centered realizations have some nice properties, due to the flexibility in choosing $z_0$ always disjoint from the set of poles of $\G$, e.g., the order of a centered minimal realization always equals the McMillan degree $\delta(\G)$ and $\G(z_0)$ equals the matrix $D$ in \eqref{realc}. We call the realization \eqref{realc} {\it proper} if $\al E - \beta A$ is invertible. Thus, by using centered realizations we recover standard-like characterization of the TFM. Centered realizations have been widely used in the literature to solve problems for generalized systems whose TFM is improper \cite{gohberg88,gohberg92,rakowski92,oara11}. Throughout this paper, we will consider proper realizations centered on the unit circle, i.e.,  $z_0\in\pDi$ not a pole of $\G$. Furthermore, we consider $\al\in\pDi,\ \beta:=\ab$, and thus $z_0=\al/\ab=\ap\in\partial\Di$.  

Conversions between descriptor realizations and centered realizations on $z_0\in\partial\Di$ can be done can be done by simple manipulations. Consider a descriptor realization
\be
\G(z) = D+C(z E-A)^{-1}B =:\ba{c|c}
A-z E & B\\
\hline
C & D\ea_{\infty}
\ee
and fix $z_0\in\partial\Di$. Then there exist $U$ and $V$ two invertible (even unitary) matrices such that
\bel{dec}
U(A-zE)V = \ba{cc}
A_1 - zE_1 & A_{12} - zE_{12}\\
0 & A_2
\ea,
\ee
where $A_2$ is nonsingular (contains the non--dynamic modes) and $\rank\ba{cc}E_1&E_{12}\ea = \rank E$, see \cite{oara09} for proof and numerical algorithms. Let 
$$\ba{c} B_1 \\ B_2\ea:=V^*(A-z_0 E)^{-1}B,\ \ba{cc} C_1 & C_2\ea :=CV,$$
where the partitions are conformable with \eqref{dec}. A direct check shows that the following realization of $\G$ is centered at $z_0$ and proper:
\be
\G(z) = \ba{c|c}
A_1 - zE_1 & -E_1B_1-E_{12}B_2\\
\hline
C_1 & D-C_1B_1-C_2B_2
\ea_{z_0}.
\ee

We say that the system \eqref{realc} is stable if its {\it pole pencil} $A-zE$ has $\Lambda(A-zE)\subset\Di$, see e.g. \cite{dai89}. Note that any stable system belongs to $\rhf$. The {\it system pencil} is by definition
$$\mathcal{S}_\G(z):=\ba{cc}
A-zE & B(\al-\beta z)\\
C & D
\ea.
$$

The pair $(A-zE,B)$ is called {\it stabilizable} if (i) $\rank \ba{cc}A-z E & B\ea=n,$ for all $z\in\C\backslash\Di$, and (ii) $\rank \ba{cc} E & B\ea=n$. We call the pair $(C,A-zE)$ detecta-ble if the pair $(A^*-zE^*,C^*)$ is stabilizable.

We say that a square system $\G\in\C^{m\x m}(z)$ is {\it unitary} on the unit circle if $\G^\#(z)\G(z)=I, \forall z\in\pDi\backslash\Lam(A-zE)$, where $\G^\#(z):=\G^*(1/z^*)$. If, in addition, $\G\in\rhf$ then $\G$ is called {\it inner}. The following lemma will be used in the sequel to characterize inner systems given by centered realizations (see for example \cite{goh92} and \cite{oara99}).  

\begin{lem}
\label{inner_lem}
Let $\G$ be a TFM without poles at $z_0$, having a minimal realization as in \eqref{realc}. Then $\G$ is unitary (inner) iff $D^*D=I_m$ and there is an invertible (negative definite) Hermitian matrix $X=X^*$ such that 
\bel{inner_e}
\baa{lcr}
E^*XE-A^*XA+C^*C &=&0,\\
D^*C+B^*X(\al E-\beta A) &=&0.
\eaa
\ee
\end{lem}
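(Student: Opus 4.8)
The plan is to turn the circle condition into a rational identity, compute a realization of $\G^\#\G$ via a product formula for centered realizations, and show that the two displayed equations are exactly the obstruction to $\G^\#\G=I_m$.

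\emph{Reduction and adjoint realization.} Since $\pDi$ is infinite and $\G^\#\G$ is rational, $\G$ unitary on $\pDi$ is equivalent to the identity $\G^\#(z)\G(z)=I_m$ for all $z$. First I would record the adjoint realization: substituting $z\mapsto 1/z^*$ in \eqref{realc} and using $\beta=\ab$ gives \[ \G^\#(z)=\ba{c|c} E^*-zA^* & C^*\\ \hline B^* & D^*\ea_{z_0}=D^*+B^*(zA^*-E^*)^{-1}C^*(\al-\ab z). \] Evaluating the identity at $z=z_0\in\pDi$, where the centering factor $\al-\ab z$ vanishes so that both factors collapse to their feedthrough terms $\G(z_0)=D$ and $\G^\#(z_0)=D^*$, yields $D^*D=I_m$; this settles the feedthrough condition in both directions.

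\emph{Key identity and sufficiency.} Using the product formula for two centered realizations sharing the center $z_0$, $\G^\#\G$ has an order-$2n$ realization with pole pencil $\diag(E^*-zA^*,\ A-zE)$, coupling block $(\al-\ab z)C^*C$, input $\col(C^*D,\ B)$, output $(B^*,\ D^*C)$ and feedthrough $D^*D$. Writing $s:=\al-\ab z$, $P:=(zE-A)^{-1}$, $Q:=(zA^*-E^*)^{-1}$, $G:=\al E-\ab A$ and $\Delta:=D^*C+B^*XG$, the core computation is that for \emph{any} Hermitian $X$ satisfying the first equation (so that $C^*C=A^*XA-E^*XE$), \[ \G^\#(z)\G(z)-D^*D=s\,\Delta\,PB+s\,B^*Q\,\Delta^*. \] The mechanism is that the $s^2$ term telescopes against the two linear terms: the cancellation $-sQ^{-1}XG-sG^*XP^{-1}+s^2(A^*XA-E^*XE)\equiv0$ is a pure pencil identity, independent of $X$, verified by expanding $Q^{-1}$ and $P^{-1}$. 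Sufficiency is then immediate: if the second equation also holds then $\Delta=0$, the right-hand side vanishes identically, and $\G^\#\G=D^*D=I_m$; in the inner case the stability hypothesis $\Lam(A-zE)\subset\Di$ gives $\G\in\rhf$.

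\emph{Necessity, inner case.} Here I must produce $X$. When $\G$ is inner, $\Lam(A-zE)\subset\Di$ forces $E$ invertible with $\rho(AE^{-1})<1$, so the generalized Stein equation $A^*XA-E^*XE=C^*C$ has a unique solution; that solution is Hermitian because the equation is invariant under $X\mapsto X^*$, and the Stein structure together with observability (minimality) forces $X<0$, in particular invertible. With this $X$ the first equation holds by construction, so the key identity reduces $0=\G^\#\G-D^*D$ to $s\Delta PB+sB^*Q\Delta^*\equiv0$. Splitting by pole location --- $\Delta PB$ has all its poles in $\Di$, whereas $B^*Q\Delta^*$ has them at the reciprocals $1/\bar\lambda$, outside $\bar\Di$ --- forces $\Delta(zE-A)^{-1}B\equiv0$, and controllability of $(A-zE,B)$ then gives $\Delta=0$, which is the second equation.

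\emph{The main obstacle.} The delicate point is the general unitary case, where the pencil may carry reciprocal eigenvalue pairs $\lambda,1/\bar\lambda$, so the Stein equation need not be uniquely, or even, solvable and the construction above breaks down. There I would instead build $X$ from a state-space equivalence: since $\G^\#=\G^{-1}$ and $D$ is unitary, the inversion formula supplies a second order-$n$ centered realization of $\G^\#$, namely $\ba{c|c}(A-\al BD^{-1}C)-z(E-\ab BD^{-1}C) & -BD^{-1}\\ \hline D^{-1}C & D^{-1}\ea_{z_0}$, which I match against the adjoint realization. Uniqueness of minimal realizations yields invertible intertwining matrices $W,T$, and the para-Hermitian symmetry $(\G^\#)^\#=\G=(\G^{-1})^\#$ is what should force $W$ and $T$ to be conjugate-transpose compatible, so that $X:=W^*G^{-1}$ comes out Hermitian and invertible and satisfies both equations. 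Verifying that this symmetry indeed yields $X=X^*$ is the step I expect to require the most care.
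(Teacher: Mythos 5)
The paper itself does not prove this lemma---it is imported from \cite{goh92} and \cite{oara99}---so your attempt can only be judged on its own merits, and much of it is correct. The adjoint realization $\G^\#(z)=D^*+B^*(zA^*-E^*)^{-1}C^*(\al-\ab z)$ is right; evaluating at $z=z_0\in\pDi$, where the centering factor vanishes, does give $D^*D=I_m$ (this is exactly the payoff of centering on the circle); and your key identity checks out: for \emph{every} Hermitian $X$ one has $(zA^*-E^*)X(\al E-\ab A)+(\ab E^*-\al A^*)X(zE-A)=(\al-\ab z)(A^*XA-E^*XE)$, so under the Stein equation the $s^2$ term telescopes and $\G^\#\G-D^*D=s\Delta PB+sB^*Q\Delta^*$ as you claim; this matches the Popov-function structure \eqref{popovf} that the paper uses elsewhere. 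The inner-case necessity (unique Stein solution because $\Lam(A-zE)\subset\Di$ excludes reciprocal eigenvalue pairs, $X<0$ by observability from minimality, then $\Delta=0$ by pole separation plus controllability) is also sound, modulo the small point that pole splitting first shows each of the two terms is polynomial and proper, hence constant, and the constant is killed by evaluating at $z_0$.

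There are, however, two genuine gaps. First, in the sufficiency direction you write that ``the stability hypothesis $\Lam(A-zE)\subset\Di$ gives $\G\in\rhf$''---but stability is \emph{not} a hypothesis in that direction: for the inner case the hypothesis is only $X<0$, and to conclude that $\G$ is inner you must \emph{derive} $\Lam(A-zE)\subset\Di$ from the Stein equation $E^*XE-A^*XA+C^*C=0$, the sign condition $X<0$, and detectability of $(C,A-zE)$ supplied by minimality; this is precisely the paper's Lemma \ref{MIn}, and without it the ``inner'' half of the equivalence is unproven. Second, the necessity in the general unitary case---the only place the lemma truly goes beyond the classical inner characterization, since reciprocal pole pairs $\lambda,1/\bar\lambda$ make the Stein operator singular---remains a plan rather than a proof: matching the adjoint realization of $\G^\#$ against the inversion realization of $\G^{-1}$ and extracting $X$ from the intertwining pair $(W,T)$ is indeed the route taken in the cited literature, but you leave exactly the decisive step ($X=X^*$, with invertibility) unverified, and you also assume without justification that the intertwining pair between two minimal centered realizations is unique (the inhomogeneous relations $WC^*=-BD^{-1}$ and $B^*T=D^{-1}C$ do pin it down, but that uniqueness itself requires an argument in the descriptor/centered setting). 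As written, your proposal fully proves unitary sufficiency, proves the inner case modulo Lemma \ref{MIn}, and sketches but does not establish the general unitary necessity.
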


Let $\G\in\mathcal{RL}_\infty(\pDi)$, the Banach space of general discrete--time TFMs (possibly improper or polynomial) that are bounded on $\pDi$. Then the $\hf$--norm of $\G$ is defined as:
$$\|\G\|_\infty:=\sup_{\theta\in[0,2\pi)} \sigma_{max}\big(\G(\ejt)\big).$$
We denote by $\bhf$ the set of all stable and bounded TFMs, that is, $\bhf:=\{\G\in\rhf : \|\G\|_\infty<1\}.$

Consider now the structure $\Sig:=(A-zE,B;\ Q,L,R)$, where $A,E\in\C^{n\x n},B,L\in\C^{m\x n}, Q=Q^*\in\C^{n\x n},R=R^*\in\C^{m\x m}$. $\Sig$ can be seen as an abbreviated representation of a controlled system $\G$ and a quadratic performance index, see \cite{genricc,eu13diss}. We associate with $\Sig$ two mathematical objects of interest. The matrix equation 
\bel{ricc}
\baa{r}
E^*XE-A^*XA+Q-\big((\al E-\beta A)^*XB+L\big)\cdot\\\cdot R^{-1}\big(L^*+B^*X(\al E-\beta A)\big)=0
\eaa
\ee
is called {\it the descriptor discrete--time algebraic Riccati equation} and it is denoted with DDTARE$(\Sig)$. Necessary and sufficient existence conditions together with computable formulas are given in \cite{oara2013ricc}. We say that the Hermitian square matrix $X=X^*\in\C^{n\x n}$ is the {\it unique} stabilizing solution to DDTARE($\Sig$) if $\Lam\big(A-zE + BF(\al-\beta z)\big)\subset\Di$, where 
\bel{stabfeed}
F:=-R^{1}\big(B^*X(\al E -\beta A) + L^*\big)
\ee
is the stabilizing feedback. We define next a parahermitian TFM $\fP\in\C^{m\x m}(z)$, also known as the discrete--time Popov function \cite{genricc}:
\bel{popovf}
{\bf\Pi_\Sig}(z) = \ba{cc|c}
A-zE & 0 & B\\
Q(\al-\beta z) & E^*-zA^* & L\\
\hline
L^* & B^* & R
\ea_{z_0}.
\ee
It can be easily checked that ${\bf\Pi_\Sig}$ is exactly the TFM of the Hamiltonian system, see \cite{eu13diss}. Moreover, the descriptor symplectic pencil, as defined in \cite{oara2013ricc}, is exactly the system pencil $\mathcal{S}_{\fP}$ associated with the realization \eqref{popovf} of ${\bf\Pi_\Sig}$. We are now ready to state two important results.  

\begin{prop}
\label{pp2}
Let $\Sig:=(A-zE,B;\ Q,L,R)$. Assume $\Lam(A-zE)\subset\Di$. The following statements are equivalent.

\begin{enumerate}[(i)]
\item $\fP(\ejt)<0,$  for all $\theta\in[0,2\pi)$.
\item $R<0$ and DDTARE($\Sig$) has a stabilizing hermitian solution $X=X^*$.
\end{enumerate}
\end{prop}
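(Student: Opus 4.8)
The plan is to pivot on a spectral factorization of the Popov function $\fP$ induced by a stabilizing solution of DDTARE$(\Sig)$, together with the identification, recalled in the text, of the descriptor symplectic pencil with the system pencil $\mathcal{S}_{\fP}$ of the realization \eqref{popovf}. The key auxiliary fact I would establish first is the following factorization: if $R<0$ and $X=X^*$ is the stabilizing solution, with $F$ as in \eqref{stabfeed}, then the centered realization
$$V(z)=\ba{c|c} A-zE & B\\ \hline -F & I\ea_{z_0}$$
is bistable, i.e. $V,V^{-1}\in\rhf$ with $V(z_0)=I$ — its pole pencil $A-zE$ is stable by hypothesis and its zero pencil $A-zE+BF(\al-\beta z)$ is stable because $F$ is stabilizing — and
$$\fP(z)=V^\#(z)\,R\,V(z),\qquad V^\#(z):=V^*(1/z^*).$$
I would verify this identity by a direct realization manipulation, using that the stabilizing solution rewrites \eqref{ricc} as $E^*XE-A^*XA+Q=F^*RF$ together with $(\al E-\beta A)^*XB+L=-F^*R$; consistency at the center is automatic, since $V(z_0)=V^\#(z_0)=I$ forces $\fP(z_0)=R$.

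Granting this, the implication $(ii)\Rightarrow(i)$ is immediate. On the unit circle $1/z^*=z$, so $V^\#(\ejt)=V(\ejt)^*$ and the factorization reads $\fP(\ejt)=V(\ejt)^*\,R\,V(\ejt)$. Since $V(\ejt)$ is invertible by bistability and $R<0$, this congruence preserves negative definiteness and yields $\fP(\ejt)<0$ for all $\theta\in[0,2\pi)$.

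For $(i)\Rightarrow(ii)$ the assertion $R<0$ comes for free. The pole pencil of $\fP$ in \eqref{popovf} is block triangular with diagonal blocks $A-zE$ and $E^*-zA^*$, whose generalized eigenvalues lie in $\Di$ and in $\bar\C\backslash\bar\Di$ respectively; hence $\fP$ has no pole at $z_0\in\pDi$, and evaluating the centered realization at its own center gives $\fP(z_0)=R$, so $(i)$ forces $R<0$. It remains to produce the stabilizing solution. Strict negativity makes $\fP(\ejt)$ nonsingular for every $\theta$, so $\fP$ has no zeros on $\pDi$; equivalently the system pencil $\mathcal{S}_{\fP}$, which is exactly the descriptor symplectic pencil of $\Sig$, has no generalized eigenvalues on $\pDi$. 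Because this pencil enjoys the eigenvalue symmetry $\lambda\leftrightarrow 1/\bar\lambda$, the absence of boundary eigenvalues splits its $2n$ generalized eigenvalues (finite and infinite) into exactly $n$ inside and $n$ outside $\pDi$, which defines the stable deflating subspace. I would then invoke the existence theory of \cite{oara2013ricc}: the graph matrix of this deflating subspace is the sought Hermitian stabilizing solution $X=X^*$ of DDTARE$(\Sig)$.

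The step I expect to be the main obstacle is precisely this last one — converting the boundary inequality into a genuine stabilizing solution. The difficulty is intrinsic to the descriptor, centered setting: one must guarantee regularity of $\mathcal{S}_{\fP}$ and keep track of its eigenvalues at infinity, ensure that the stable deflating subspace is a graph subspace (its upper block invertible), and then check that the resulting $X$ is Hermitian and actually renders $A-zE+BF(\al-\beta z)$ stable, rather than merely solving \eqref{ricc}. These are exactly the verifications packaged in \cite{oara2013ricc}, on which I would rely to close the argument, the stability hypothesis $\Lam(A-zE)\subset\Di$ and the strict sign of $\fP$ being what single out the correct subspace and its graph form.
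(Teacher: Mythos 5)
Your proposal matches the paper's proof in both directions: for $(ii)\Rightarrow(i)$ you use exactly the paper's spectral factor $\textbf{S}(z)$ (your $V$) and the factorization $\fP=\textbf{S}^\#R\,\textbf{S}$ with bistability from the stabilizing feedback, and for $(i)\Rightarrow(ii)$ you use the same chain --- $\fP(z_0)=R<0$ via the centered realization, no zeros on $\pDi$, hence no eigenvalues of the symplectic pencil $\mathcal{S}_{\fP}$ on $\pDi$, and existence of the stabilizing solution by \cite{oara2013ricc}. Your added remarks on the pole-pencil triangular structure and the graph-subspace verifications are just a more explicit account of what the paper delegates to \cite{oara2013ricc}, so the argument is correct and essentially identical.
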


\begin{prop}
\label{BRL}
{\textbf{\emph (Bounded-Real Lemma)}} Let $\G\in\C^{p\x m}(z)$ having a minimal proper realization as in \eqref{realc} and consider $\tilde\Sig:=(A-zE,B;\ C^*C,C^*D, D^*D-I_m)$. Then the following statements are equivalent.
\begin{enumerate}[(i)]
\item $\G\in\bhf$, i.e., $\Lam(A-zE)\subset\Di$ and $\|\G\|_\infty<1$.
\item $D^*D-I_m<0$ and DDTARE($\tilde\Sig$) has a stabilizing hermitian solution $X=X^*\le0$.
\end{enumerate}
\end{prop}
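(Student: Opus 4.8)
The plan is to identify the Popov function of $\tilde\Sig$ with the para--Hermitian symbol $\G^\#\G-I_m$ and then to read the whole equivalence off Proposition \ref{pp2}. First I would verify the realization--level identity
$$\mathbf{\Pi}_{\tilde\Sig}(z)=\G^\#(z)\G(z)-I_m$$
by substituting $Q=C^*C$, $L=C^*D$, $R=D^*D-I_m$ into \eqref{popovf} and expanding the product $\G^\#\G$ from the centered realization \eqref{realc}: the constant part gives $D^*D$, the two first--order terms, namely $D^*C(zE-A)^{-1}B(\al-\beta z)$ and its para--conjugate, give the off--diagonal contributions, and the resolvent--quadratic term gives the $C^*C$--weighted contribution, so that matching terms reproduces \eqref{popovf} exactly; properness ($\al E-\beta A$ invertible) is what legitimizes these manipulations. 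Since $\G$ has no poles on $\pDi$ under the stability hypothesis, $\|\G\|_\infty<1$ is equivalent to $\sigma_{max}(\G(\ejt))^2<1$ for every $\theta$, i.e.\ to $\mathbf{\Pi}_{\tilde\Sig}(\ejt)<0$ on the whole unit circle, the supremum being attained by continuity on the compact set $\pDi$.

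Both implications then turn on Proposition \ref{pp2} together with a single rearrangement of DDTARE($\tilde\Sig$). Writing $N:=L^*+B^*X(\al E-\beta A)$, the Riccati equation reads
$$E^*XE-A^*XA+C^*C=N^*R^{-1}N,$$
and since $R=D^*D-I_m<0$ the right--hand side is negative semidefinite. For the implication (i)$\Rightarrow$(ii), stability is given, so by the identity above $\mathbf{\Pi}_{\tilde\Sig}(\ejt)<0$ and Proposition \ref{pp2} yields $R<0$ together with a stabilizing Hermitian solution $X$; the sign $X\le0$ then follows because the displayed equation is the generalized Stein equation $E^*XE-A^*XA=-M$ with source $M:=C^*C-N^*R^{-1}N\ge0$ for the stable pencil $A-zE$, whose unique solution, written as a convergent series in the Schur matrix $AE^{-1}$, is $\le0$.

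The genuinely delicate implication is (ii)$\Rightarrow$(i), because Proposition \ref{pp2} carries $\Lam(A-zE)\subset\Di$ as a standing hypothesis and so cannot itself supply open--loop stability. Here I would evaluate the rearranged Riccati equation on a generalized eigenvector, $Av=\lambda Ev$, obtaining $(|\lambda|^2-1)(Ev)^*X(Ev)\ge\|Cv\|^2\ge0$, where $(Ev)^*X(Ev)\le0$ because $X\le0$. Any finite mode with $|\lambda|\ge1$, and, taking $Ev=0$, the infinite mode as well, then forces $Cv=0$, i.e.\ an unobservable mode on or outside $\pDi$, which is excluded by detectability of $(C,A-zE)$ coming from minimality; hence $\Lam(A-zE)\subset\Di$. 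Once open--loop stability is secured, Proposition \ref{pp2} becomes applicable in the reverse direction and returns $\mathbf{\Pi}_{\tilde\Sig}(\ejt)<0$, i.e.\ $\|\G\|_\infty<1$, closing the equivalence. The main obstacle is exactly this boundary analysis: confining the spectrum to the closed disk is routine, but excluding the unimodular and infinite generalized eigenvalues is where minimality must be used in full, mirroring the way the strict bound $\|\G\|_\infty<1$ forbids unimodular transmission.
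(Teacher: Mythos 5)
Your proof is correct, and although it shares the paper's skeleton---the identity $\mathbf{\Pi}_{\tilde\Sig}(z)=\G^{\#}(z)\G(z)-I_m$ followed by Proposition \ref{pp2}---it resolves the two genuinely delicate steps by a different mechanism. The paper converts the DDTARE into the factorized (Kalman--Szeg\H{o}--Popov--Yakubovich) system $D^*D-I=-V^*V$, $(\al E-\beta A)^*XB+C^*D=-W^*V$, $E^*XE-A^*XA+C^*C=-W^*W$, reads the last equation as a Stein equation with the augmented output $\col(C,W)$, and invokes Lemma \ref{MIn} in both directions; crucially, detectability of the augmented pair is extracted from the stabilizing feedback $F=-V^{-1}W$ (stability of $A-zE+BF(\al-\beta z)$ forces $(W,A-zE)$ detectable), not from minimality. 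You instead keep the Riccati equation in quotient form $E^*XE-A^*XA+C^*C=N^*R^{-1}N$ with $N^*R^{-1}N\le0$: for (i)$\Rightarrow$(ii) you get $X\le0$ from the convergent Stein series, and for (ii)$\Rightarrow$(i) you evaluate the quadratic form on generalized eigenvectors, including the infinite mode $Ev=0$, and exclude unimodular, antistable, and infinite unobservable modes via detectability of $(C,A-zE)$ inherited from minimality. In effect you re-prove inline precisely the portion of Lemma \ref{MIn} that the paper leaves unproved, so your argument is more self-contained; the paper's route is more modular and, because it gets detectability of $(\col(C,W),A-zE)$ for free from the stabilizing solution, it does not actually need observability of the given realization, whereas your argument uses minimality in full (which the statement does assume, so this is legitimate). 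Two small points to tighten: before writing the series in $AE^{-1}$, note that $\Lam(A-zE)\subset\Di$ excludes infinite generalized eigenvalues, hence $E$ is invertible in that direction; and when asserting that minimality of a centered realization yields detectability of $(C,A-zE)$, i.e., $\rank\big[\begin{smallmatrix}A-\lambda E\\ C\end{smallmatrix}\big]=n$ for $\lambda\notin\Di$ and $\rank\big[\begin{smallmatrix}E\\ C\end{smallmatrix}\big]=n$, cite the rank characterization of minimal centered realizations (e.g., \cite{oara11}), since the paper defines minimality only as minimality of order.
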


\section{PROBLEM FORMULATION}

Let $\T\in\C^{p\x m}(z)$ be a general discrete--time system, possibly improper or polynomial, with input $u$ and output $y$, written in partitioned form:
\be
\ba{c} y_1 \\y_2 \ea = \T \ba{c} u_1\\u_2 \ea =
\ba{cc} \T_{11} & \T_{12}\\
\T_{21} & \T_{22}\ea \ba{c} u_1\\u_2 \ea,
\ee
where $\T_{ij}\in\C^{p_i\x m_j}(z)$ with $i,j\in \{1,2\}$, $m:=m_1+m_2$, $p:=p_1+p_2$. The {\it suboptimal $\hf$ control problem} consists in finding all controllers $\K\in\C^{m_2\x p_2}(z)$, $u_2=\K y_2$, for which the closed--loop system 
\bel{lft}
\G:= \textbf{LFT}(\T,\K) := \T_{11}+\T_{12}\K(I-\T_{22}\K)^{-1} \T_{21}
\ee
is well--posed, stable and $\|\G\|_\infty<1$, i.e., $\G\in\bhf$.

We make a set of additional assumptions on $\T$ which either simplify the formulas with no loss of generality, or are of technical nature. Let
\bel{sspace}
\T(z)=
\ba{c|cc}
A-z E &B_1& B_2\\
\hline
C_1 & 0 & D_{12}\\
C_2 & D_{21} & 0
\ea_{z_0}
\ee
be a minimal realization with $z_0\in\pDi\backslash\Lam(A-zE)$.
\begin{enumerate}[{\bf (H$_1$)}]
\item The pair $(A-zE,B_2)$ is stabilizable and the pair $(C_2,A-zE)$ is detectable.
\item For all $\theta\in [0,2\pi)$, we have that
\bel{zeros11}
\rank \ba{cc}
A-\ejt E & B_2 (\al- \beta e^{j\theta})\\
C_1 & D_{12}
\ea = n+m_2.\ee

\item For all $\theta\in [0,2\pi)$, we have that
\bel{zeros22}
\rank \ba{cc}
A-\ejt E & B_1 (\al- \beta e^{j\theta})\\
C_2 & D_{21}
\ea = n+p_2.\ee
\end{enumerate}

\begin{rem}
The hypothesis {\bf (H$_1$)} is a necessary condition for the existence of stabilizing controllers, see \cite{zhou} for the standard case. We assume in the sequel that {\bf (H$_1$)} is always fulfilled. 
\end{rem}

\begin{rem}
The hypotheses {\bf (H$_2$)} and {\bf (H$_3$)} are {\it regularity assumptions}, see \cite{zhou, genricc} for the standard case. In particular, it follows from {\bf (H$_2$)} that $\T_{12}$ has no zeros on the unit circle, $p_1\ge m_2$, and that $\rank D_{12}=m_2$ (thus $D_{12}^* D_{12}$ invertible). Dual conclusions follow from {\bf (H$_3$)}: $\T_{21}$ has no zeros on $\pDi$, $m_1\ge p_2$, $\rank D_{21}=p_2$, and $D_{21} D_{21}^*$ is invertible. 

Furthermore, we note that {\bf (H$_2$)} and {\bf (H$_3$)} are reminiscent from the general $\hdoi$ problem \cite{2013h2aut} and are by no means necessary conditions for the existence of a solution to the general $\hf$ control problem. If either of these two assumptions does not hold, we get a {\it singular} $\hf$ optimal control problem, which is beyond the scope of this paper. 
\end{rem}

\begin{rem}
We have implicitly assumed in \eqref{sspace} that $\T_{11}(z_0) = D_{11}=0$ and $\T_{22}(z_0) =D_{22}=0$, without restricting the generality. If $\K$ is a solution to the problem with $D_{22}=0$, then $\K(I+D_{22}\K)^{-1}$ is a solution to the original problem. The extension for $D_{11}\ne0$ follows by employing a technique similar to the one in Chapter 14.7, \cite{zhou}. In particular, it also follows from this assumption that the closed--loop system is automatically well--posed. 
\end{rem}

\section{MAIN RESULT}

\begin{figure*}
\caption{}
$$\Sig_c:=\left(A-zE, \ba{cc}B_1& B_2\ea;\ C_1^*C_1, \ba{cc}0& C_1^*D_{12}\ea, \ba{cc}
-I_{m_1}& 0\\
0 & D_{12}^*D_{12}\ea\right),$$

$$F_c:=\ba{c}F_1\\F_2\ea=
\ba{c}
B_1^*X(\al E-\beta A)\\
-(D_{12}^*D_{12})^{-1}\big(D_{12}^*C_1+B_2^*X(\al E-\beta A)\big)
\ea,
$$

$$\Sig_\x:=\left(A^*-zE^*+F_1^*B_1^*(\al-\beta z), 
\ba{c}-(D_{12}^*D_{12})^{1/2}F_2 \\ C_2+D_{21}F_1\ea^*;\ \ 
B_1B_1^*, \ba{cc}0& B_1D_{21}^*\ea, \ba{cc}
-I_{p_1}& 0\\
0 & D_{21}D_{21}^*\ea
    \right)$$
\label{mainres}
\end{figure*}


\begin{figure*}
\caption{}
\bel{mainctrl}
\textbf{C}(z) =
\ba{c|cc}
A-zE+(B F +B_ZC_F)(\al-\beta z) & B_Z & -B_2(D_{12}^*D_{12})^{-\frac{1}{2}}+(\al E-\beta A)ZF_2^*(D_{12}^*D_{12})^{\frac{1}{2}}\\
\hline
-F_2 & 0 & (D_{12}^*D_{12})^{-\frac{1}{2}}\\
(D_{21}D_{21}^*)^{-\frac{1}{2}}C_F & (D_{21}D_{21}^*)^{-\frac{1}{2}} & 0
\ea_{z_0},
\ee
$$\text{where } B:=\ba{cc} B_1& B_2\ea,\quad C_F:=C_2+D_{21}F_1,\quad B_Z: = -\big(B_1D_{21}^*+(\al E-\beta A)ZC_F^*\big)(D_{21}D_{21}^*)^{-1}.$$

\bel{centralctrl}
\K_0(z)=\ba{c|c}
A-zE+\big((B_1B_1^*X-B_2B_2^*X)(\al E-\beta A)-(\al E-\beta A)ZC_2^*C_2\big)(\al-\beta z) &
-(\al E-\beta A)ZC_2^*\\
\hline
B_2^*X(\al E-\beta A) & 0
\ea_{z_0}.
\ee
\end{figure*}

\begin{figure*}
\caption{}
\bel{exss}
\T(z) = \ba{cccr|r:r}
0.906488-z  & 0.0816012 &-0.0005 & 0				&-0.0015 & 0.0095 \\
    0.0741349&  0.90121-z & -0.000708383 & 0			& -0.0096 &0.0004\\
    0 & 0 & 0.132655-z & 0				 		&	0.8673 &0.0000\\
    0 & 0 & 0 & 1								&	1.0000 &-1.0000\\
    \hline
    1    & 0    & 0  &   1 &0 &-1\\
     0   &  1    & 0   & -1& 0 &1\\
     \hdashline
     0    & 0 &    1   & -5& 1& 0
  \ea_{z_0=1}, 
  \ee

  \bel{extf}
  \T(z) = \ba{cc}
 \dfrac{       z^4 - 2.939 z^3 + 2.989 z^2 - 1.158 z + 0.1078}
           { z^3 - 1.94 z^2 + 1.051 z - 0.1076}					&       \dfrac{-z^3 + 1.798 z^2 - 0.7928 z - 0.008547}{ z^2 - 1.808 z + 0.8109}\\\cr
 
 \dfrac{     -z^4 + 2.95 z^3 - 3.01 z^2 + 1.168 z - 0.1082}
           { z^3 - 1.94 z^2 + 1.051 z - 0.1076} 					&       \dfrac{z^3 - 1.808 z^2 + 0.8109 z + 0.0003578} {z^2 - 1.808 z + 0.8109}\\\cr
 
  \dfrac{    -5 z^2 + 5.796 z + 0.07141}
            {  z - 0.1327}									& 5z-5
 
\ea  ,
   \ee

  \bel{riccsol}
  X = \ba{rrrr}  
  -13.6023 & -13.7705 &   0.0187 &  -0.0025\\
  -13.7705 & -13.9409 &   0.0189 &  -0.0025\\
    0.0187   & 0.0189  & -0.0000  & -0.0000\\
   -0.0025  & -0.0025  & -0.0000 &  -0.0000\\
   \ea,\ 
    Z = \ba{rrrr}
     -0.0002   &-0.0004 &   0.0068  & -0.0081\\
   -0.0004   &-0.0007  &  0.0143   &-0.0171\\
    0.0068 &   0.0143 &  -0.2753  &  0.3297\\
   -0.0081 &  -0.0171   & 0.3297 &  -0.3948
   \ea,
   \ee
   
   \bel{ex_ctrl}
   \K(z)=\frac{-0.1561 z^4 + 0.459 z^3 - 0.467 z^2 + 0.1809 z - 0.01679}
 {   z^4 - 2.808 z^3 + 2.722 z^2 - 0.9979 z + 0.08391},
 \ee
 
 \bel{ex_cl}
\G(z) =  \dfrac{ \ba{c}
0.3 (z+0.0255) (z-0.1313) (z-0.8269) (z-0.9794) (z-0.9817) (z-1)\\\cr
    -0.2988 (z-0.01409) (z-0.1344) (z-0.8269) (z-0.9817) (z-0.984) (z-1)
    \ea}
          { (z+0.005487) (z-0.1327) (z-0.8269) (z-0.8685) (z-0.9817)^2}.
          \ee

\end{figure*}

The following theorem is a crucial result in $\hf$ control theory. In the literature, it is known as Redheffer theorem. 
\begin{thm}
\label{redh}
Assume that $\T$ in \eqref{sspace} is unitary, $D_{21}$ is square and invertible, $\Lam(A-zE-B_1D_{21}^{-1}C_2(\al - \beta z))\subset\Di$, and let $\K$ be a controller for $\T$. Then $\G\in\bhf$ if and only if $\T$ is inner and $\K\in\bhf$.
\end{thm}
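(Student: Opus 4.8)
The plan is to reduce everything to a pointwise (frequency-by-frequency) energy balance on $\pDi$ supplied by the unitarity of $\T$, and then to treat the two logically independent issues—contractiveness and internal stability—separately, the former through this identity and the latter through a small-gain argument together with the inner characterization of Lemma \ref{inner_lem}. Because $\T$ is unitary, $\T(\ejt)$ is an isometry for every $\theta$ off the poles, so writing $\col(y_1,y_2)=\T\col(u_1,u_2)$ and $u_2=\K y_2$ yields, pointwise on $\pDi$,
$$\|y_1\|^2-\|u_1\|^2=\|u_2\|^2-\|y_2\|^2=\langle(\K^*\K-I)y_2,\,y_2\rangle.\qquad(\star)$$
I would also record two structural consequences of the hypotheses: from $\T^\#\T=I$ the block $\T_{22}$ satisfies $\T_{22}^*\T_{22}=I-\T_{12}^*\T_{12}\le I$ on $\pDi$, hence $\|\T_{22}\|_\infty\le1$; and since $D_{21}$ is square invertible while $\Lam(A-zE-B_1D_{21}^{-1}C_2(\al-\beta z))\subset\Di$ records the finite zeros of $\T_{21}$, the square block $\T_{21}$ has no zeros on $\pDi$, so $\T_{21}(\ejt)$ is invertible for every $\theta$.

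For the ``if'' direction ($\T$ inner, $\K\in\bhf\Rightarrow\G\in\bhf$) I would split stability and norm. Stability: since $\T$ is inner it is stable with $\|\T_{22}\|_\infty\le1$, while $\|\K\|_\infty<1$, so $\|\T_{22}\K\|_\infty<1$ and the small-gain theorem gives $(I-\T_{22}\K)^{-1}\in\rhf$; then $\G=\T_{11}+\T_{12}\K(I-\T_{22}\K)^{-1}\T_{21}$ is a composition of stable TFMs and lies in $\rhf$. Contractiveness: from $(\star)$ and $\|\K\|_\infty<1$ we get $\|y_1\|^2-\|u_1\|^2\le(\|\K\|_\infty^2-1)\|y_2\|^2\le0$ pointwise, so $\sigma_{\max}(\G(\ejt))\le1$; strictness follows because the closed-loop map $u_1\mapsto y_2$ equals $(I-\T_{22}\K)^{-1}\T_{21}$, invertible at each $\ejt$, so $u_1\ne0$ forces $y_2\ne0$ and the inequality becomes strict. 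Continuity on the compact circle then upgrades this to $\|\G\|_\infty<1$, i.e. $\G\in\bhf$.

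For the ``only if'' direction ($\G\in\bhf\Rightarrow\T$ inner and $\K\in\bhf$) the contractiveness of $\K$ runs the same identity backwards: reading $(\star)$ with $y_1=\G u_1$ gives $\langle(\K^*\K-I)y_2,y_2\rangle=\langle(\G^*\G-I)u_1,u_1\rangle<0$ for $u_1\ne0$, and since $u_1\mapsto y_2$ is onto the $y_2$-space at every $\ejt$ (again by invertibility of $\T_{21}(\ejt)$ and well-posedness), every nonzero $y_2$ is attained, whence $\sigma_{\max}(\K(\ejt))<1$ and $\|\K\|_\infty<1$. The remaining, and genuinely harder, task is to promote the standing hypothesis ``$\T$ unitary'' to ``$\T$ inner'', i.e. $\Lam(A-zE)\subset\Di$, and then to deduce $\K\in\rhf$. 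Here I would invoke Lemma \ref{inner_lem}: minimality of \eqref{sspace} produces a unique invertible Hermitian $X$ solving \eqref{inner_e}, with $\T$ inner iff $X<0$. I would show that a non-negative direction of $X$ corresponds to an anti-stable pole of $\T$ which, because the zero-pencil condition forbids its cancellation through $\T_{21}$, would survive into $\G$ and violate $\G\in\rhf$—forcing $X<0$. Once $\T$ is inner, the star-inverse expressing $\K$ as an LFT of $\G$ has its pole pencil governed by $\T_{21}^{-1}\in\rhf$ and by the stable closed loop, giving $\K\in\rhf$ and hence $\K\in\bhf$.

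The main obstacle is precisely this stability bookkeeping in the ``only if'' direction. The energy identity $(\star)$ is blind to analyticity: it equally certifies a unitary $\T$ with anti-stable poles, so the contractiveness arguments transfer verbatim yet say nothing about where the poles of $\T$ and $\K$ lie. Ruling out pole--zero cancellations that could hide an unstable mode of $\T$ inside a stable $\G$ is the delicate point, and it is exactly here that the hypotheses $D_{21}$ invertible and $\Lam(A-zE-B_1D_{21}^{-1}C_2(\al-\beta z))\subset\Di$ (equivalently $\T_{21}^{-1}\in\rhf$) do the real work, through the inertia analysis of the inner-characterizing matrix $X$ of Lemma \ref{inner_lem}.
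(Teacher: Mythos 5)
Your ``if'' direction is correct but takes a genuinely different route from the paper's. The paper computes a minimal centered realization of $\G=\textbf{LFT}(\T,\K)$ and verifies condition (ii) of the Bounded-Real Lemma (Proposition \ref{BRL}) directly, exhibiting the block-diagonal stabilizing solution $X_G=\diag(X,X_K)\le0$ assembled from the inner-characterizing $X$ of Lemma \ref{inner_lem} and the BRL solution $X_K$ for $\K$, with $R_G=D_{21}^*(D_K^*D_K-I)D_{21}<0$. Your small-gain argument ($\|\T_{22}\|_\infty\le1$ from unitarity, $\|\K\|_\infty<1$) plus the pointwise energy identity $(\star)$, with strictness forced by invertibility of $(I-\T_{22}\K)^{-1}\T_{21}$ on $\pDi$ and compactness of the circle, reaches the same conclusion more cheaply and avoids the paper's ``lengthy but simple algebraic manipulations''; what the paper's route buys is that stability and contractiveness drop out of one realization-level Riccati certificate, staying inside the machinery used throughout the paper. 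Your norm argument for $\|\K\|_\infty<1$ in the converse is also essentially the paper's (which runs $\G^\#\G-I<0$ through \eqref{lft} using unimodularity of $\T_{21}$).

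The genuine gap is in the ``only if'' direction, exactly where you flag it---but flagging is not proving: both the promotion of ``unitary'' to ``inner'' and the membership $\K\in\rhf$ are left as plans (``I would show that a non-negative direction of $X$ corresponds to an anti-stable pole \ldots would survive into $\G$''). Two nontrivial claims are asserted there without proof. First, the inertia dichotomy---that failure of $X<0$ produces an anti-stable generalized eigenvalue of $A-zE$---is not supplied by Lemma \ref{MIn}, which only gives the equivalence $X\le0\Leftrightarrow\Lam(A-zE)\subset\Di$ for a detectable pair already satisfying the Stein equation; a signature-counting inertia theorem for descriptor Stein equations would have to be stated and established. Second, the no-cancellation claim---that such an anti-stable mode necessarily appears as a pole of $\T_{11}+\T_{12}\K(I-\T_{22}\K)^{-1}\T_{21}$ for an arbitrary rational $\K$---is the actual content of the step: a priori $\K$ could cancel unstable modes of $\T$ in this particular combination, and ruling that out requires the standing hypothesis {\bf (H$_1$)}, i.e.\ detectability of $(C_2,A-zE)$, which your argument never invokes, whereas it is precisely the lever in the paper's proof (detectability of $(C_2,A-zE)$, the Stein equation from Lemma \ref{inner_lem}, and Lemma \ref{MIn} yield $\Lam(A-zE)\subset\Di$; stability of $\K$ then follows from {\bf (H$_1$)}, stability of $\G$, and innerness of $\T$). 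A smaller, repairable omission: your pointwise bounds presuppose that $\K$ and the loop have no poles on $\pDi$; for $\K$ this can be recovered from rationality (a circle pole would make $\sigma_{\max}(\K(\ejt))$ blow up near it, contradicting your strict bound), but it should be said, since $(\star)$ is, as you note, blind to analyticity.
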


Recall that we associate with $\Sig=(A-zE,B;\ Q,L,R)$ the DDTARE($\Sig$) in \eqref{ricc}. We are ready to state the main result.

\begin{thm}
\label{mainthm}
Let $\T\in\C^{p\x m}(z)$ having a minimal realization as in \eqref{sspace}. Assume that {\bf (H$_1$)}, {\bf (H$_2$)}, and {\bf (H$_3$)} hold. Supp-ose that DDTARE($\Sig_c$) and DDTARE($\Sig_\x$) have stabilizing solutions $X=X^*\le 0$ and $Z=Z^*\le0$, respectively, where $\Sig_c$ and $\Sig_\x$ are given in Box \ref{mainres}. Then there exists a controller $\K\in\C^{p_2\x m_2}(z)$ that solves the suboptimal $\hf$ control problem. Moreover, the set of all such $\K$ is given by
$${\normalfont\K=\textbf{LFT}(\textbf{C},\Q),}$$
where $\Q\in\bhf$ is an arbitrary stable and bounded parameter, and $\textbf{C}$ is given in \eqref{mainctrl}.
\end{thm}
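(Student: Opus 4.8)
The plan is to reduce the parametrization to the Redheffer theorem (Theorem~\ref{redh}) by transforming the generalized plant $\T$ into an \emph{inner} two-port system and then reading off all admissible controllers as lower LFTs with a free contractive parameter $\Q$. The two stabilizing Riccati solutions $X$ and $Z$ serve as the control and the filtering certificates, exactly as in the proper DGKF scheme, but here every manipulation must be performed on the centered realization \eqref{sspace} so that the pencil structure $\al E-\beta A$ is carried through unchanged, and so that the sign conditions $X\le 0$, $Z\le 0$ can be exploited.

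First I would use the control solution $X$ of DDTARE$(\Sig_c)$ together with the stabilizing gain $F_c=\col(F_1,F_2)$, which by \eqref{stabfeed} renders $\Lam\big(A-zE+BF_c(\al-\beta z)\big)\subset\Di$; here $F_1$ encodes the worst-case disturbance direction (the negative weight $-I_{m_1}$ in $\Sig_c$) and $F_2$ the stabilizing control. Assumption \textbf{(H$_2$)} forces $D_{12}^*D_{12}$ to be invertible with $\T_{12}$ free of unit-circle zeros, so the normalizations $(D_{12}^*D_{12})^{\pm1/2}$ appearing in $\textbf{C}$ are legitimate. The key verification is that the corresponding factor is inner: I would invoke Lemma~\ref{inner_lem} and check that its two identities \eqref{inner_e} are precisely the two blocks obtained by completing the square in DDTARE$(\Sig_c)$ at the feedback $F_c$, the sign condition $X\le 0$ being exactly what upgrades ``unitary'' to ``inner''. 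Dually, I would use $Z$ of DDTARE$(\Sig_\x)$ and the output-injection gain built from $C_F=C_2+D_{21}F_1$ to render the measurement channel inner; assumption \textbf{(H$_3$)} supplies invertibility of $D_{21}D_{21}^*$ and absence of unit-circle zeros of $\T_{21}$, legitimizing the dual normalizations $(D_{21}D_{21}^*)^{\pm1/2}$.

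These two steps assemble into the two-port $\textbf{C}$ of \eqref{mainctrl}, and a Redheffer-product ($\otimes$) computation shows that feeding back an arbitrary $\K=\textbf{LFT}(\textbf{C},\Q)$ around $\T$ is equivalent to connecting $\Q$ to a transformed plant $\T_a$, i.e.\ $\textbf{LFT}\big(\T,\textbf{LFT}(\textbf{C},\Q)\big)=\textbf{LFT}(\T_a,\Q)$, where $\T_a$ is inner (hence unitary), has a square invertible $(2,1)$ feedthrough block, and meets the shifted-pencil stability hypothesis of Theorem~\ref{redh}. Since $\T_a$ is inner, that theorem reduces $\G\in\bhf$ to the single condition $\Q\in\bhf$. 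Setting $\Q=0$ collapses $\textbf{C}$ to the central controller $\K_0$ of \eqref{centralctrl}, which proves existence, while internal stability of the loop is inherited from the stabilizing choice of both gains.

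I expect the principal obstacle to be the verification that $\T_a$ is genuinely inner in the centered-realization setting. Unlike the proper case, the coupling identity $D^*C+B^*X(\al E-\beta A)=0$ of \eqref{inner_e} must be tracked against the $\al E-\beta A$ pencil through both the $X$- and the $Z$-transformation, while simultaneously confirming that minimality and well-posedness survive and that the two changes of variable compose into the compact realization \eqref{mainctrl} with no spurious pole-zero cancellations. Confirming that the feedthrough structure of $\T_a$ matches the hypotheses of Theorem~\ref{redh} is where \textbf{(H$_2$)}--\textbf{(H$_3$)} and the definiteness $X,Z\le 0$ are finally consumed, and it is the most delicate part of the argument.
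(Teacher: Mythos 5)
Your ingredients are the right ones---the $X$- and $Z$-Riccati solutions as inner certificates via Lemma~\ref{inner_lem} (with completion of the square in DDTARE($\Sig_c$) yielding exactly the two identities \eqref{inner_e}, and the definiteness upgrading unitary to inner), the Redheffer theorem, and star-product algebra---and up to the factorization step your plan coincides with the paper's. But packaging everything into a \emph{single} application of Theorem~\ref{redh} hides a genuine gap: the transformed plant $\T_a$ in $\textbf{LFT}(\T,\textbf{LFT}(\textbf{C},\Q))=\textbf{LFT}(\T_a,\Q)$ is not inner in general. The $X$-step produces the factorization $\T=\T_I\otimes\T_O$ of \eqref{tito}, whose left factor $\T_I$ is inner; the $Z$-step, applied to $\T_O$ (this is how $\Sig_\x$ arises, as $\Sig_o$ written for $\T_O$), produces a \emph{co-inner} (dual) factor on the measurement side. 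Hence $\T_a$ is the star product of an inner and a co-inner system, which in general fails $\T_a^\#\T_a=I$, so Theorem~\ref{redh} cannot be invoked once on $\T_a$: it must be used twice, in its stated form for $\T_I$ (Proposition~\ref{2bloc}) and in its dual, transposed form for the co-inner factor (Proposition~\ref{2blocdual}, which the paper obtains by duality). This is precisely why the paper organizes the proof as a successive reduction, general problem $\to$ two-block $\to$ dual two-block $\to$ one-block, rather than as one Redheffer application.

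The second gap is completeness. Even granting $\G\in\bhf\iff\Q\in\bhf$ for controllers of the form $\K=\textbf{LFT}(\textbf{C},\Q)$, that establishes only that every such $\K$ solves the problem, whereas the theorem asserts that \emph{every} solution arises this way. The paper proves this converse in the one-block base case (Proposition~\ref{1bloc}): one verifies $\textbf{C}_1\otimes\T=\T\otimes\textbf{C}_1=\ba{cc}0&I\\I&0\ea$ (a coincidence of the two orderings that, as the paper notes, is not generally true), so that for any admissible $\K$, setting $\Q:=\G=\textbf{LFT}(\T,\K)\in\bhf$ gives $\textbf{LFT}(\textbf{C}_1,\Q)=\textbf{LFT}(\textbf{C}_1\otimes\T,\K)=\K$. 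Your proposal never addresses this converse inclusion, nor the one-block computation that is the source of the explicit formula \eqref{mainctrl} (your $\textbf{C}$ is $\textbf{C}_1$ evaluated on the twice-transformed plant). The delicate points you do flag---carrying $D^*C+B^*X(\al E-\beta A)=0$ through the $\al E-\beta A$ pencil, (H$_2$)/(H$_3$) legitimizing the square-root normalizations, and $X,Z\le0$---are indeed where the paper's verifications live, and those portions of your plan are sound.
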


Theorem \ref{mainthm} provides sufficient conditions for the existence of suboptimal $\hf$ controllers. Further, we can easily obtain {\it the central controller}, for which $\Q=0$, under the so called normalizing conditions: 
$$D_{12}^*\ba{cc} C_1 &D_{12}\ea = \ba{cc} 0 & I\ea,\ \ba{c} B_1\\ D_{21}\ea D_{21}^*=\ba{c} 0 \\ I\ea.$$

\begin{cor}
Take the same hypotheses as in Theorem \ref{mainthm}. Then the central controller under normalizing conditions is $\K_0(z)$ in \eqref{centralctrl}.
\end{cor}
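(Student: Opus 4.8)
The plan is to read off $\K_0$ directly from the parametrization of Theorem~\ref{mainthm} by setting $\Q=0$, and then to collapse the resulting realization under the normalizing conditions. Since $\textbf{LFT}(\textbf{C},\Q)=\textbf{C}_{11}+\textbf{C}_{12}\Q(I-\textbf{C}_{22}\Q)^{-1}\textbf{C}_{21}$, choosing $\Q=0$ leaves only the $(1,1)$ subsystem $\textbf{C}_{11}$ of the generator $\textbf{C}$ in \eqref{mainctrl}. Viewing $\textbf{C}$ as a two--port centered realization whose feedthrough block is $\ba{cc} 0 & (D_{12}^*D_{12})^{-1/2}\\ (D_{21}D_{21}^*)^{-1/2} & 0\ea$, the subsystem $\textbf{C}_{11}$ is obtained by retaining the state, the first input column $B_Z$, the first output row $-F_2$, and the zero $(1,1)$ feedthrough, i.e.
\[
\K_0=\textbf{LFT}(\textbf{C},0)=\ba{c|c} A-zE+(BF+B_ZC_F)(\al-\beta z) & B_Z\\ \hline -F_2 & 0\ea_{z_0}.
\]
It then remains only to match this realization, term by term, with \eqref{centralctrl}.

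Next I would impose the normalizing conditions $D_{12}^*\ba{cc}C_1 & D_{12}\ea=\ba{cc}0 & I\ea$ and $\ba{c}B_1\\D_{21}\ea D_{21}^*=\ba{c}0\\I\ea$, which read componentwise as $D_{12}^*C_1=0$, $D_{12}^*D_{12}=I_{m_2}$, $B_1D_{21}^*=0$, and $D_{21}D_{21}^*=I_{p_2}$. Consequently every square--root weight in \eqref{mainctrl} becomes an identity, and the mixed products $D_{12}^*C_1$ and $B_1D_{21}^*$ (together with the conjugate transpose $D_{21}B_1^*=0$) drop out. Substituting into the expressions of Box~\ref{mainres} and \eqref{mainctrl}, I would simplify the four ingredients in turn: from $F_2=-(D_{12}^*D_{12})^{-1}\big(D_{12}^*C_1+B_2^*X(\al E-\beta A)\big)$ one gets $-F_2=B_2^*X(\al E-\beta A)$; since $F_1=B_1^*X(\al E-\beta A)$ and $D_{21}B_1^*=0$, the feedback output collapses to $C_F=C_2+D_{21}F_1=C_2$; then $B_Z=-\big(B_1D_{21}^*+(\al E-\beta A)ZC_F^*\big)(D_{21}D_{21}^*)^{-1}=-(\al E-\beta A)ZC_2^*$; and finally, writing $BF=B_1F_1+B_2F_2$, the state correction becomes $BF+B_ZC_F=(B_1B_1^*X-B_2B_2^*X)(\al E-\beta A)-(\al E-\beta A)ZC_2^*C_2$. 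Assembling these four identities reproduces \eqref{centralctrl} exactly.

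This argument is essentially a bookkeeping specialization of Theorem~\ref{mainthm}, so I do not expect a genuine obstacle. The only point requiring care is the collapse $C_F=C_2$: it hinges on taking the conjugate transpose of the normalizing identity $B_1D_{21}^*=0$ to obtain $D_{21}B_1^*=0$, and this single simplification propagates simultaneously into $B_Z$ and into the state matrix $BF+B_ZC_F$, so a slip there would corrupt both the input and the dynamics of $\K_0$. A secondary check is the correct reading of the $(1,1)$ subsystem from the packed two--port realization \eqref{mainctrl}: one must confirm that the external controller channel (mapping $y_2$ to $u_2$) is the first port and the $\Q$--channel the second, which is consistent with the zero $(1,1)$ feedthrough block and with the dimensions $B_Z\in\C^{n\x p_2}$ and $F_2\in\C^{m_2\x n}$.
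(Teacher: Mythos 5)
Your proposal is correct and is exactly the argument the paper intends (the paper omits a proof, asserting the corollary as an immediate specialization of Theorem \ref{mainthm}): setting $\Q=0$ extracts the $(1,1)$ subsystem of $\textbf{C}$, and the normalizing conditions collapse $F_2$, $C_F$, $B_Z$, and $BF+B_ZC_F$ precisely to the quantities in \eqref{centralctrl}. Your attention to $D_{21}B_1^*=0$ (the conjugate transpose of $B_1D_{21}^*=0$) and to the port ordering in the packed realization addresses the only two places where a slip could occur.
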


\begin{rem}
Consider a proper system centered at $\infty$, for which $E=I_n$, $\alpha = 1$, and $\beta=0$. It can be easily checked that we recover the controller formulas from the standard case, see e.g. \cite{genricc} and \cite{zhou} for the continuous--time counterpart. 
\end{rem}

\section{A NUMERICAL EXAMPLE}

It is well--known that $\hf$ controllers are highly effective in designing robust feedback controllers with disturbance rejection for F--16 aircraft autopilot design. The discretized short period dynamics of the F--16 aircraft can be written as:
\bel{f16d}
x_{k+1}=\tilde Ax_k+\tilde B_1u_{1,k}+\tilde B_2u_{2,k},\ k\ge0,\ x_0=0.
\ee
The system has three states, and $m_1=m_2=1$. The discrete--time plant model, i.e., the matrices $\tilde A,\tilde B_1$, and $\tilde B_2$ in \eqref{f16d}, was obtained in \cite{AlTamimi2007473} with sampling time $T = 0.1$s. 

We consider here a {\it trajectory prescribed path control (TPPC) problem}. In general, a vehicle flying in space constrained by a set of path equations is modeled by a system of differential--algebraic equations, see e.g. \cite{1104236,tppc}. In order to obtain a TPPC problem, we add a pole at $\infty$ (a non--dynamic mode) by augmenting the system \eqref{f16d} as follows: 
\bel{aax}
A-zE=\ba{cc}\tilde A-zI_3&0\\0&1\ea,\ba{cc}B_1 & B_2\ea = \ba{cc} \tilde B_1 & \tilde B_2\\1 & -1\ea.
\ee

Assume that all the dynamical states are available for measurement. With this and the augmentation \eqref{aax}, we obtain a minimal realization with $z_0=1$ for the system $\T$, see \eqref{exss}, having $n=4,\ m_1=m_2=1,\ p_1=2,\ p_2=1$. The TFM of $\T$ is given in \eqref{extf}. Note that the system is improper, having one pole at $\infty$, and that $\delta(\T)=n$. 

For this system, we want to find a stabilizing and contracting controller using the formulas in Theorem \ref{mainthm}. 

It can be easily checked that the system $\T$ satisfies {\bf (H$_1$)}, {\bf (H$_2$)}, and {\bf (H$_3$)}. Furthermore, the DDTARE$(\Sig_c)$ and the DDTARE$(\Sig_\x)$ have stabilizing solutions $X=X^*\le 0$ and $Z=Z^*\le0$, given in \eqref{riccsol}. Moreover, the stabilizing feedback for $\Sig_c$ was computed to be: 
   $$F_c = \ba{rrrr}
   0.0031 &   0.0031&   -0.0000   & 0.0000\\
    0.5012&   -0.4988 &  -0.0000 &   1.0000\\
    \ea.$$

Therefore, $\T$ satisfies the conditions in Theorem \ref{mainthm}. Taking $\Q=0$, we obtain with Theorem \ref{mainthm} the central {\it proper} controller given in \eqref{ex_ctrl}. The closed--loop system $\G$ is given in \eqref{ex_cl}. Note that $\G$ is proper and stable, having the poles $\{0.0054,0.1327, 0.8269, 0.8685,0.9817\}\subset\Di$. Moreover, $\|\G\|_\infty = 0.4533<1$. The singular value plots of $\T$ and $\G$ are shown in Box \ref{sigma_plot}.

\begin{figure}[h]
\centering
\includegraphics[scale=0.5]{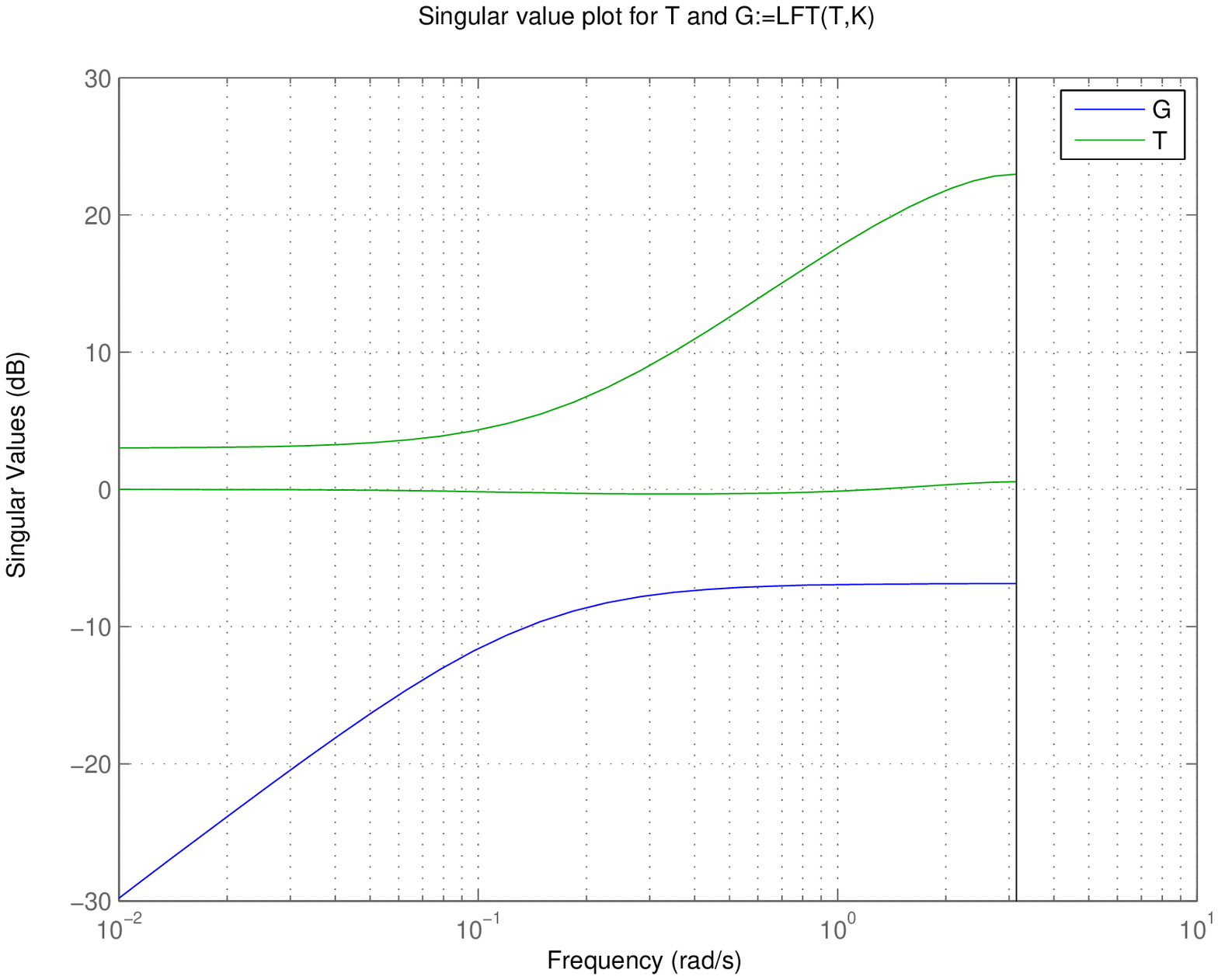}
\caption{}
\label{sigma_plot}
\end{figure}

\section{CONCLUSIONS}
We provided in this paper sufficient conditions for the existence of suboptimal $\hf$ controllers, considering a general discrete--time system. A realization--based characterization for the class of all stabilizing and contracting controllers was given. Our formulas are simple and numerically reliable for real--time applications, as it was shown in Section V. Necessary conditions and the separation structure of the $\hf$ controller will be investigated in a future work.




\section*{APPENDIX}

\begin{figure*}
\bel{1blocctrl}
\textbf{C}_1(z) =\ba{c|cc}
A-zE - (B_2D_{12}^{-1}C_1+B_1D_{21}^{-1} C_2)(\al-\beta z) & B_1D_{21}^{-1} & B_2D_{12}^{-1}\\
\hline
-D_{12}^{-1}C_1 & 0 & D_{12}^{-1}\\
-D_{21}^{-1}C_2 & D_{21}^{-1} & 0
\ea_{z_0}.
\ee
\bel{2blocctrl}
\textbf{C}_2(z)=\ba{c|cc}
A-zE + (B_2F_2-B_1D_{21}^{-1}C_2)(\al-\beta z) & B_1D_{21}^{-1} & B_2(D_{12}^*D_{12})^{-\frac{1}{2}}\\
\hline
F_2 & 0 & (D_{12}^*D_{12})^{-\frac{1}{2}}\\
-D_{21}^{-1}C_2-B_1^*X(\al E-\beta A) & D_{21}^{-1} & 0
\ea_{z_0}.
\ee
{\small\bel{tito}
\T_I(z)=\ba{c|cc}
A-zE+B_2F_2(\al-\beta z) & B_1 & B_2 (D_{12}^*D_{12})^{-\frac{1}{2}}\\
\hline
C_1+D_{12}F_2 & 0 & D_{12}(D_{12}^*D_{12})^{-\frac{1}{2}}\\
-F_1 & I & 0
\ea_{z_0},\T_O(z)=
\ba{c|cc}
A-zE+B_1F_1(\al-\beta z) & B_1 & B_2\\
\hline
-(D_{12}^*D_{12})^{\frac{1}{2}}F_2 & 0 & (D_{12}^*D_{12})^{\frac{1}{2}}\\
C_2+D_{21}F_1 & D_{21} & 0
\ea_{z_0}.
\ee}
\end{figure*}

\begin{figure*}
\bel{sigo}
\Sig_o:=\left(A^*-zE^*, \ba{cc}C_1^*& C^*_2\ea;\ B_1B^*_1, \ba{cc}0& B_1D_{21}^*\ea, \ba{cc}
-I_{p_1}& 0\\
0 & D_{21}D_{21}^*\ea\right), 
\ee

\bel{ctrl_2b}
\textbf{C}_3(z) = \ba{c|cc}
A-zE + (H_2C_2-B_2D_{12}^{-1}C_1)(\al-\beta z) & 	H_2  & 	-B_2D_{12}^{-1}-(\al E-\beta A)YC_1^*\\
\hline
-D_{12}^{-1}C_1 & 0 & D_{12}^{-1}\\
(D_{21}D_{21}^*)^{-\frac{1}{2}}C_2 & (D_{21}D_{21}^*)^{-\frac{1}{2}} & 0
\ea_{z_0},
\ee
$$
\text{where } H_2 = -(B_1D_{21}^*+(\al E-\beta A)YC_2^*)(D_{21}D_{21}^*)^{-1}.
$$

\end{figure*}

In order to proceed with the proofs, we need an additional result, for which the proof is omitted (for brevity).
\begin{lem}
\label{MIn}
Let $(C,A-zE)$ be a detectable pair and assume that there exists a matrix $X=X^*$ such that the following Stein equation holds: $E^*XE-A^*XA+C^*C=0.$ Then $X\le0$ if and only if $\Lam(A-zE)\subset\Di$.
\end{lem}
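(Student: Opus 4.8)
The plan is to prove both implications by testing the Stein equation $E^*XE-A^*XA+C^*C=0$ against generalized eigenvectors of the pole pencil $A-zE$. Concretely, if $\lambda\in\C$ is a finite generalized eigenvalue with eigenvector $v\neq0$, so that $Av=\lambda Ev$, then pre- and post-multiplying the Stein equation by $v^*$ and $v$ and using $v^*A^*XAv=|\lambda|^2(Ev)^*X(Ev)$ produces the single ``energy identity''
$$(1-|\lambda|^2)(Ev)^*X(Ev)=-\|Cv\|^2,$$
which will drive the whole argument. Note that for any (possibly non-semisimple) finite eigenvalue there is at least one honest eigenvector $v\in\Ker(A-\lambda E)\backslash\{0\}$, so I never need the full Weierstrass chain.

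For the direction $\Lam(A-zE)\subset\Di\Rightarrow X\le0$, I would first observe that stability forces $E$ to be invertible, since a singular $E$ would create an infinite generalized eigenvalue, which cannot lie in $\Di$. Setting $\hat A:=AE^{-1}$ and $\hat C:=CE^{-1}$, the Stein equation becomes the classical $X-\hat A^*X\hat A+\hat C^*\hat C=0$ with $\hat A$ Schur, because $\Lam(\hat A)=\Lam(A-zE)\subset\Di$. The Stein operator $X\mapsto X-\hat A^*X\hat A$ is then nonsingular, so the solution is unique and given by the convergent series $X=-\sum_{k\ge0}(\hat C\hat A^k)^*(\hat C\hat A^k)\le0$; hence any Hermitian $X$ solving the equation must coincide with it and be negative semidefinite. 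This direction does not use detectability.

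For the converse $X\le0\Rightarrow\Lam(A-zE)\subset\Di$, I would exclude both finite eigenvalues outside $\Di$ and infinite eigenvalues. For finite $\lambda$ with $|\lambda|\ge1$, the factor $1-|\lambda|^2\le0$ together with $X\le0$ makes the left side of the energy identity nonnegative, while its right side is nonpositive; both must therefore vanish, forcing $Cv=0$. Combined with $(A-\lambda E)v=0$ and $v\neq0$, this contradicts the detectability rank condition $\rank\ba{c}A-\lambda E\\ C\ea=n$, valid since $\lambda\in\C\backslash\Di$. For an infinite eigenvalue, i.e. $Ev=0$ with $v\neq0$, the Stein equation instead gives $(Av)^*X(Av)=\|Cv\|^2\ge0$, and $X\le0$ again yields $Cv=0$; then $\ba{c}E\\ C\ea v=0$ contradicts the second detectability condition $\rank\ba{c}E\\ C\ea=n$. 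With no finite eigenvalue outside $\Di$ and $E$ nonsingular, every generalized eigenvalue lies in $\Di$.

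The main obstacle, and the precise point where this descriptor statement departs from the classical $E=I_n$ inertia theorem, is the treatment of the infinite generalized eigenvalues in the converse: the usual rank condition on the finite spectrum is insufficient, and one genuinely needs the ``$\infty$-mode'' detectability condition $\rank\ba{c}E\\ C\ea=n$ to rule out $\Ker E\cap\Ker C\neq\{0\}$. Everything else reduces to the routine verification that the eigenvector computations go through using a single eigenvector per eigenvalue.
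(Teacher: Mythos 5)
Your proof is correct, and in fact the paper offers no proof to compare it against: Lemma \ref{MIn} is stated in the Appendix with its proof explicitly omitted ``for brevity,'' so your argument fills a genuine gap rather than duplicating or diverging from a printed one. Checking it on its merits: the forward direction is sound --- $\Lam(A-zE)\subset\Di$ for a regular pencil does force $E$ invertible (otherwise $\infty\in\Lam(A-zE)$), the substitution $\hat A=AE^{-1}$, $\hat C=CE^{-1}$ turns the descriptor Stein equation into the classical one, and uniqueness of the solution for Schur $\hat A$ together with the convergent series $X=-\sum_{k\ge0}(\hat C\hat A^k)^*(\hat C\hat A^k)$ gives $X\le0$ without detectability. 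The converse is also complete: the energy identity $(1-|\lambda|^2)(Ev)^*X(Ev)=-\|Cv\|^2$ is verified directly from $Av=\lambda Ev$, and for $|\lambda|\ge1$ both sides must vanish, so $Cv=0$ contradicts the finite-spectrum rank condition $\rank\ba{c}A-\lambda E\\ C\ea=n$ on $\C\backslash\Di$, which is exactly how the paper defines detectability (via stabilizability of the dual pair; note $\C\backslash\Di$ is closed under conjugation, so the transpose/conjugation bookkeeping is harmless). Your handling of the point where the descriptor case departs from the classical inertia theorem is the right one: an infinite eigenvalue exists iff $\Ker E\neq\{0\}$, the Stein equation on such $v$ gives $(Av)^*X(Av)=\|Cv\|^2$, and $X\le0$ forces $\ba{c}E\\C\ea v=0$, contradicting the second detectability condition $\rank\ba{c}E\\C\ea=n$ --- and since any eigenvector for an offending eigenvalue yields the contradiction, one honest eigenvector per eigenvalue indeed suffices and no Weierstrass chains are needed. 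One could quibble that you should say explicitly that regularity of the pencil is assumed (it is, throughout the paper) so that $\Lam(A-zE)$ and the eigenvector arguments are well defined, but that is a presentational point, not a gap.
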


\begin{proof} {\bf (Proposition \ref{pp2})} 
$(i)\Rightarrow(ii)$: If $\fP(\ejt)<0,\forall\theta\in[0,2\pi)$, then $\fP$ has no zeros on $\pDi$. Thus $\mathcal{S}_{\fP}$, i.e., the symplectic pencil, has no generalized eigenvalues on $\pDi$, which implies that DDTARE($\Sig$) has a stabilizing solution, see \cite{oara2013ricc}. Further, since $z_0\in\pDi$, $\fP(z_0)=R<0$.

$(ii)\Rightarrow(i)$: Let $F$ be the stabilizing feedback as in \eqref{stabfeed}. Consider the {\it spectral factor}
$\textbf{S}(z):=
\ba{c|c}
A-zE & B\\ \hline
 -F& I
\ea_{z_0}.$
It can be easily checked that the factorization $\fP(z)=\textbf{S}^\#(z)R\textbf{S}(z)$ holds. Moreover, $\textbf{S}\in\rhf$ and $\textbf{S}^{-1}\in\rhf$, since $\Lam(A-zE+BF(\al-\beta z))\subset\Di$. Thus $\textbf{S}$ is a unity in $\rhf$. Since $R<0$, $\fP(\ejt)<0,\forall \theta\in[0,2\pi)$.
\end{proof}

\begin{proof} {\bf (Proposition \ref{BRL})}
$(i)\Rightarrow(ii)$: Note that 
$$\|\G\|_\infty<1 \Leftrightarrow \G^\#(\ejt)\G(\ejt)-I<0,\forall\theta\in[0,2\pi).$$ 
After manipulations we get that $\G^\#(z)\G(z)-I={\bf\Pi_{\tilde\Sig}}(z)$. Thus ${\bf\Pi_{\tilde\Sig}}(\ejt)<0,\forall\theta$. Since $A-zE$ is stable, it follows with Proposition \ref{pp2} that $D^*D-I<0$ and DDTARE($\tilde\Sig$) has a stabilizing solution $X=X^*$. It remains to prove that $X\le0$. It is easy to check that the DDTARE($\tilde\Sig$) has a stabilizing solution $X=X^*$ iff the following system of matrix equations 
\bel{kspys}
\baa{rcl}
D^*D-I&=&-V^*V\\
(\al E-\beta A)^*XB+C^*D&=&-W^*V\\
E*XE-A^*XA+C^*C &=& -W^*W
\eaa\ee
has a solution $(X=X^*,V,W)$, with $F=-V^{-1}W.$ Further, note that the last equation in \eqref{kspys} can be written as 
\bel{eech}
E^*XE-A^*XA+\ba{c}C\\W\ea^*\ba{c}C\\W\ea=0.
\ee
The pair $\left(\ba{c}C\\W\ea,A-zE\right)$ is detectable, since the pair $(W,A-zE)$ is detectable, from the fact that $A-zE-V^{-1}W(\al-\beta z)$ is stable. Using these conclusions, it follows from Lemma \ref{MIn} that $X\le0$.

$(ii)\Rightarrow(i)$: Following a similar reasoning as above, we have from $(ii)$ that $\left(\ba{c}C\\W\ea,A-zE\right)$ is detectable. Since $X\le0$ and the equality \eqref{eech} holds, we get from Lemma \ref{MIn} that $\Lam(A-zE)\subset\Di$. Using the implication $(ii)\Rightarrow(i)$ in Proposition \ref{pp2}, we have that ${\bf\Pi_{\tilde\Sig}}(\ejt)<0,\forall\theta$. But this is equivalent with $\|\G\|_\infty<1$. Thus $\G\in\bhf$.
\end{proof}
\begin{proof} {\bf (Theorem \ref{redh})} {\it If:} Let $$\K(z)=
\ba{c|c} A_K-zE_K& B_K \\
\hline
C_K & D_K
\ea_{z_0}
$$ be a minimal realization. Since $\K\in\bhf$, we have from Proposition \ref{BRL} that $D_K^*D_K-I<0$ and DDTARE($\Sig_K$) has a stabilizing solution $X_K=X_K^*\le0$, where $\Sig_K:=(A_K-zE_K,B_K;\ C_K^*C_K,C_K^*D_K, D_K^*D_K-I)$. Further, from $\T$ inner we get from Lemma \ref{inner_lem} that $D^*D=I$ and there is $X=X^*\le0$ such that \eqref{inner_e} holds. Compute now a minimal centered realization for $\G:=\textbf{LFT}(\T,\K)$, see Section 2.3.2 in \cite{eu13diss}. After leghty but simple algebraic manipulations we get that the realization of $\G$ satisfies condition (ii) in Proposition \ref{BRL}, with $X_G := \ba{cc}X&0\\0&X_K\ea=X_G^*\le0$, and $R_G:=D_{21}^*(D_K^*D_K-I)D_{21}<0$. It follows that $\G\in\bhf$.

{\it Only if:} From $(C_2,A-zE)$ detectable, $\T$ unitary, and Lemma \ref{MIn}, it follows that $\Lam(A-zE)\subset\Di$, thus $\T$ is inner. Since $\G\in\bhf$, $\|\G\|_\infty<1$, which is equivalent with $\G^\#(z)\G(z)-I<0$, for all $z\in\pDi$. Using equation \eqref{lft} and the fact that $\T_{21}$ is a unity in $\rhf$ (unimodular), we get after some  manipulations that $\K^\#(z)\K(z)-I<0$, for all $z\in\pDi$, which is equivalent with $\|\K\|_\infty<1$. The stability of $\K$ is a direct consequence of the fact that {\bf (H$_1$)} is fulfilled, that $\G$ is stable, and that $\T$ is inner. 
\end{proof}

We proceed now with the proof of our main result (stated in Theorem \ref{mainthm}), which is based on a successive reduction to simpler problems, called the one--block problem and the two--block problem. We borrowed the terminology from the model matching problem. 

Consider the {\it one--block problem}, for which $p_1 = m_2$, $p_2 = m_1$, i.e., $D_{12}$ and $D_{21}$ are square, and $\T_{12}$ and $\T_{21}$ are invertible, having only stable zeros, i.e.,
\begin{enumerate}[{\bf ({A}$_1$)}]
\item $D_{12}\in \C^{m_2\x m_2}$ is invertible and 
$\Lambda\big(A-zE-B_2D_{12}^{-1}C_1(\al-\beta z)\big)\subset\Di.$
\item $D_{21}\in \C^{m_1\x m_1}$ is invertible and 
$\Lambda\big(A-zE-B_1D_{21}^{-1}C_2(\al-\beta z)\big)\subset\Di.$
\end{enumerate}

\begin{prop}
\label{1bloc}
For the one--block problem the class of all controllers that solve the $\hf$ control problem is ${\normalfont\K=\textbf{LFT}(\textbf{C}_1,\Q)}$, $\Q\in\bhf$ is arbitrary and $\textbf{C}_1$ is in \eqref{1blocctrl}. 
\end{prop}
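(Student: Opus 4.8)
The plan is to reduce the one-block problem to the Redheffer theorem (Theorem \ref{redh}), which characterizes closed-loop contractivity in terms of the contractivity of the controller parameter. First I would use assumptions {\bf (A$_1$)} and {\bf (A$_2$)} to factor the plant $\T$ into an inner-times-stable-times-stable form: because $D_{12}$ is square invertible and $\T_{12}$ has only stable zeros (so $\T_{12}$ is a unit in $\rhf$ up to inner factor), and dually for $\T_{21}$ via $D_{21}$, I can write $\T = \T_I \otimes \T_O$ (or an equivalent Redheffer/feedback factorization) where the realizations $\T_I$ and $\T_O$ in \eqref{tito} are the natural candidates. The matrices $F_1, F_2$ appearing in those realizations come from the stabilizing feedbacks associated with $\Sig_c$, and I would verify directly, using Lemma \ref{inner_lem}, that $\T_I$ is inner: this amounts to checking that $\T_I^* \T_I = I$ on $\pDi$ via the two identities in \eqref{inner_e}, which should follow from the DDTARE($\Sig_c$) satisfied by $X$ and from the normalizing role of the factor $(D_{12}^*D_{12})^{1/2}$.

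Next I would connect the controller formula \eqref{1blocctrl} to this factorization. The idea is that after absorbing the inner factor, the original problem $\G = \textbf{LFT}(\T,\K) \in \bhf$ becomes equivalent to a reduced problem of the same type but with a plant that is itself inner and square-invertible in the lower blocks, precisely the hypotheses of Theorem \ref{redh}. I would show that substituting $\textbf{C}_1$ from \eqref{1blocctrl} as the controller coefficient and parametrizing $\K = \textbf{LFT}(\textbf{C}_1,\Q)$ transforms, under the factorization, into the statement that the transformed controller equals $\Q$ itself. Concretely, the Redheffer product structure should collapse so that the closed loop built from $\T$ and $\K = \textbf{LFT}(\textbf{C}_1,\Q)$ coincides with the closed loop built from the inner plant and the parameter $\Q$; then Theorem \ref{redh} gives $\G \in \bhf$ if and only if $\Q \in \bhf$, which is exactly the asserted parametrization.

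The two technical ingredients I would check carefully are: (1) well-posedness and the algebra of the Redheffer/LFT composition, i.e., that $\textbf{LFT}(\T, \textbf{LFT}(\textbf{C}_1,\Q))$ genuinely equals $\textbf{LFT}(\text{inner plant}, \Q)$ as TFMs, which is a chain-scattering-type identity; and (2) the stability bookkeeping, ensuring that the poles introduced by the realization manipulations all lie in $\Di$, using {\bf (A$_1$)}, {\bf (A$_2$)}, and {\bf (H$_1$)}. The main obstacle I expect is the first point: verifying that $\T_I$ in \eqref{tito} is inner and that the composed LFT simplifies to a single Redheffer product with $\Q$ is a lengthy realization-level computation, and one must track the centered-realization bookkeeping (the factors $\al - \beta z$ and $\al E - \beta A$) so that the identities in Lemma \ref{inner_lem} come out exactly. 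Once that factorization and the LFT-composition identity are established, the contractivity conclusion is an immediate application of Theorem \ref{redh} and the boundedness of $\Q$.
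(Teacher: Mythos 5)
Your proposal takes the wrong route for this particular proposition, and the route has a concrete hypothesis gap. The inner--outer machinery you invoke --- the factorization $\T=\T_I\otimes\T_O$ from \eqref{tito}, the feedbacks $F_1,F_2$, the verification that $\T_I$ is inner via Lemma \ref{inner_lem}, and the appeal to Theorem \ref{redh} --- is built on the stabilizing solution $X=X^*\le 0$ of DDTARE($\Sig_c$). But Proposition \ref{1bloc} assumes no Riccati solvability at all: its only hypotheses are {\bf (A$_1$)}, {\bf (A$_2$)} (plus the standing {\bf (H$_1$)}). So your argument proves a statement under strictly stronger assumptions than the one asserted, and the actual proposition remains unproven. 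Worse, in the paper's architecture the dependence runs the other way: the two--block problem (Proposition \ref{2bloc}) is solved precisely \emph{by} the $\T_I\otimes\T_O$ factorization and Theorem \ref{redh}, after which it is reduced to the one--block problem applied to $\T_O$. Proposition \ref{1bloc} is the base case of that reduction, so it must be handled by a direct argument; your plan would have you prove the base case using the reduction that rests on it, and even on your own terms you would still need to exactly invert the reduced plant $\T_O$ --- which is the one--block content you set out to prove.

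The missing elementary idea is that under {\bf (A$_1$)}--{\bf (A$_2$)} both $\T_{12}$ and $\T_{21}$ are square units in $\rhf$, so $\T$ can be inverted \emph{exactly} in the Redheffer sense: the paper checks by an equivalence transformation on realizations that $\T\otimes\textbf{C}_1=\ba{cc} 0 & I\\ I & 0\ea=:\T_R$, whence $\G=\textbf{LFT}(\T_R,\Q)=\Q$ identically --- no inner factor, no Riccati equation, and no Redheffer theorem appear anywhere. Your plan also leaves the completeness half of the parametrization unaddressed: showing ``$\G\in\bhf$ iff $\Q\in\bhf$'' only establishes that every $\Q$ yields a valid controller, not that every valid $\K$ arises as $\textbf{LFT}(\textbf{C}_1,\Q)$. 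The paper handles this by taking an arbitrary solving $\K$, setting $\Q:=\G$, and using the (case-specific, not generally valid) commutation $\textbf{C}_1\otimes\T=\T\otimes\textbf{C}_1=\T_R$ to conclude $\textbf{LFT}(\textbf{C}_1,\Q)=\textbf{LFT}(\textbf{C}_1\otimes\T,\K)=\K$. You should redo the proof as this two-sided Redheffer-inversion computation, reserving the inner factorization and Theorem \ref{redh} for the two--block step where the Riccati hypothesis is actually available.
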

\begin{proof} Let $\T_R=\T\otimes \textbf{C}_1$. With $\textbf{C}_1$ from \eqref{1blocctrl} we get after an equivalence transformation that $\T_R=\ba{cc}0 & I \\ I & 0\ea$. Thus $\G={\bf LFT}(\T_R,\Q)=\Q\in\bhf$. 

Conversely, let $\K$ be such that $\G\in\bhf$. Take $\G\equiv\Q\in\bhf$ be an arbitrary but fixed parameter. Then 
$\textbf{LFT}(\textbf{C}_1,\Q)=\textbf{LFT}(\textbf{C}_1,\G)=\textbf{LFT}(\textbf{C}_1,\textbf{LFT}(\T,\K)) =
\textbf{LFT}(\textbf{C}_1\otimes\T,\K).$ It can be checked that in this case $\textbf{C}_1\otimes\T=\T\otimes \textbf{C}_1=\T_R$ (this is not generally true). Thus $\textbf{LFT}(\textbf{C}_1,\Q)=\K$. 
\end{proof}

Consider now the {\it two--block problem}, for which $p_2=m_1$, and the hypotheses {\bf (H$_2$)} and {\bf (A$_2$)} are fulfilled.  Let $\Sig_c$ be as in Box \ref{mainres}. 
\begin{prop}
\label{2bloc}
Assume that DDTARE($\Sig_c$) has a stabilizing solution $X=X^*\le0$. Then the two--block problem has a solution. Moreover, the class of all controllers is ${\normalfont\K=\textbf{LFT}(\textbf{C}_2,\Q)}$, with $\Q\in\bhf$, and $\textbf{C}_2$ is given in \eqref{2blocctrl}.
\end{prop}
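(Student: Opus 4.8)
The plan is to reduce the two--block problem to a single application of Redheffer's theorem (Theorem \ref{redh}) through the inner factor $\T_I$ in \eqref{tito}. First I would record that, by \eqref{stabfeed}, $F_c=\ba{c}F_1\\F_2\ea$ is precisely the stabilizing feedback attached to DDTARE($\Sig_c$), so that $\Lam\big(A-zE+\ba{cc}B_1&B_2\ea F_c(\al-\beta z)\big)\subset\Di$. The whole argument then rests on two facts: (a) $\T_I$ is inner, and (b) $\T\otimes\textbf{C}_2=\T_I$. Granting these, for any $\Q\in\bhf$ the controller $\K=\textbf{LFT}(\textbf{C}_2,\Q)$ gives $\G=\textbf{LFT}(\T,\textbf{LFT}(\textbf{C}_2,\Q))=\textbf{LFT}(\T\otimes\textbf{C}_2,\Q)=\textbf{LFT}(\T_I,\Q)$, and Theorem \ref{redh} closes the loop.

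To prove (a), writing $A_I-zE_I=A-zE+B_2F_2(\al-\beta z)$ and $B_I,C_I,D_I$ for the remaining blocks of \eqref{tito}, I would apply Lemma \ref{inner_lem} with the candidate certificate $X$ itself. A direct check gives $D_I^*D_I=I_m$, since the column $D_{12}(D_{12}^*D_{12})^{-1/2}$ is an isometry. The centered structure yields the simplification $\al E_I-\beta A_I=\al E-\beta A$, the $B_2F_2$ corrections cancelling. The cross--coupling equation $D_I^*C_I+B_I^*X(\al E-\beta A)=0$ then collapses: its first block vanishes because $F_1=B_1^*X(\al E-\beta A)$, and its second because $D_{12}^*D_{12}F_2=-\big(D_{12}^*C_1+B_2^*X(\al E-\beta A)\big)$ by the definition of $F_2$. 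Expanding $E_I^*XE_I-A_I^*XA_I+C_I^*C_I$, the quadratic $F_2$ term drops out (as $|\al|=|\beta|=1$) and what remains is exactly DDTARE($\Sig_c$), hence $0$; so $\T_I$ is unitary. Stability I would obtain from Lemma \ref{MIn}: the pair $(C_I,A_I-zE_I)$ is detectable because the output injection $\ba{cc}0&-B_1\ea$ on the $-F_1$ output turns $A_I-zE_I$ into the stable pencil $A-zE+\ba{cc}B_1&B_2\ea F_c(\al-\beta z)$; combined with $X\le0$ and the Stein equation just verified, Lemma \ref{MIn} forces $\Lam(A_I-zE_I)\subset\Di$. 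Thus $\T_I\in\rhf$ and $\T_I$ is inner.

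For (b) I would form the Redheffer product $\T\otimes\textbf{C}_2$ at the level of centered realizations and reduce it by a state equivalence transformation to \eqref{tito}. As a consistency check the feedthrough already matches: since the $(2,2)$ feedthrough of $\T$ and the $(1,1)$ feedthrough of $\textbf{C}_2$ both vanish, the star--product feedthrough is $\ba{cc}0&D_{12}(D_{12}^*D_{12})^{-1/2}\\I&0\ea=D_I$. It then remains to verify the hypotheses of Theorem \ref{redh} for $\T_I$: it is unitary by (a), its $(2,1)$ feedthrough block is $I_{m_1}$ (square and invertible), and the pencil required there is $A-zE-B_1\cdot I^{-1}\cdot(-F_1)(\al-\beta z)=A-zE+\ba{cc}B_1&B_2\ea F_c(\al-\beta z)$, which is stable. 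Applying Theorem \ref{redh} to $\G=\textbf{LFT}(\T_I,\Q)$ gives $\G\in\bhf$ iff $\T_I$ is inner and $\Q\in\bhf$; as $\T_I$ is inner, this reads simply $\Q\in\bhf$, and taking $\Q=0$ proves solvability.

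Finally I would show that every solving $\K$ is captured. The off--diagonal blocks of $\textbf{C}_2$ are invertible (their feedthroughs $(D_{12}^*D_{12})^{-1/2}$ and $D_{21}^{-1}$ are), so $\Q\mapsto\textbf{LFT}(\textbf{C}_2,\Q)$ is an invertible linear--fractional map; given $\K$ with $\G\in\bhf$, let $\Q$ be its preimage, so that $\G=\textbf{LFT}(\T_I,\Q)$ as above, and Theorem \ref{redh} (only--if) forces $\Q\in\bhf$. I expect the state--space identity $\T\otimes\textbf{C}_2=\T_I$ of step (b) to be the main obstacle: the feedthrough matches at once, but the dynamic part requires carrying the Redheffer product of two order--$n$ centered pencils through an explicit equivalence transformation while keeping the $(\al-\beta z)$ bookkeeping consistent. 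A secondary technical point is the invertibility of the certificate $X$ underlying the ``unitary'' half of Lemma \ref{inner_lem}, which I would settle by restricting \eqref{tito} to a minimal part, on which the stabilizing $X$ is negative definite.
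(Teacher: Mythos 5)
Your proposal is correct in substance, but it takes a genuinely different route from the paper's. The paper factors $\T=\T_I\otimes\T_O$ with $\T_I,\T_O$ as in \eqref{tito}, proves $\T_I$ inner (exactly as you do, via Lemma \ref{inner_lem} with the stabilizing $X\le0$), then uses Theorem \ref{redh} only to establish the equivalence ${\bf LFT}(\T,\K)\in\bhf\Leftrightarrow{\bf LFT}(\T_O,\K)\in\bhf$, and finally observes that $\T_O$ satisfies {\bf (A$_1$)} and {\bf (A$_2$)}, so the already--proven one--block Proposition \ref{1bloc} applied to $\T_O$ yields $\textbf{C}_2$ in \eqref{2blocctrl}. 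You bypass $\T_O$ and Proposition \ref{1bloc} entirely, instead verifying the single state--space identity $\T\otimes\textbf{C}_2=\T_I$ and applying Redheffer once to $\G=\textbf{LFT}(\T_I,\Q)$; that identity is indeed true (it is the composition of the paper's two facts, since $\T_O\otimes\textbf{C}_2$ is the star--product identity $\left[\begin{smallmatrix}0&I\\I&0\end{smallmatrix}\right]$ by Proposition \ref{1bloc}), but as you anticipate it forces a heavier monolithic realization computation in place of the paper's modular reduction. What the paper's route buys, besides reusing Proposition \ref{1bloc}, is that $\T_O$ is exhibited explicitly, which matters beyond this proposition: in the proof of Theorem \ref{mainthm} the structure $\Sig_\x$ is obtained precisely by writing $\Sig_o$ for $\T_O$, so your shortcut would have to reintroduce $\T_O$ later anyway. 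What your route buys is a cleaner logical skeleton (one Redheffer application instead of two) and more care on two points the paper glosses over: the stability of $\T_I$, which you derive from Lemma \ref{MIn} via the output injection $\ba{cc}0&-B_1\ea$ recovering the stable pencil $A-zE+BF_c(\al-\beta z)$, and the definiteness of $X$ required by Lemma \ref{inner_lem}, which you settle by passing to a minimal part. One mild caveat in your completeness step: invertible feedthroughs of the off--diagonal blocks of $\textbf{C}_2$ give invertibility of those blocks as rational matrices and hence of the linear--fractional map generically, but the existence and well--posedness of the preimage $\Q$ for an arbitrary solving $\K$ deserves a word (the paper sidesteps this in Proposition \ref{1bloc} by computing $\textbf{LFT}(\textbf{C}_1\otimes\T,\K)=\K$ directly, which simultaneously constructs the preimage as $\Q=\textbf{LFT}(\T_O,\K)$ and certifies $\Q\in\bhf$); this is a patchable gap at the paper's own level of rigor, not a flaw in the approach.
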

\begin{proof}
Let $F_c$ be the stabilizing feedback, see Box \ref{mainres}. Consider the systems $\T_I$ and $\T_O$ in \eqref{tito}. After manipulations,  we obtain that $\T=\T_I\otimes\T_O$. Moreover, $\T_I$ is inner, since the realization \eqref{tito} satisfies the equations given in Lemma \ref{inner_lem}, with $X=X^*\le0$ the stabilizing solution of the DDTARE($\Sig_c$). Also, it can be easily checked that $\T_I$ satisfies the hypotheses of Theorem \ref{redh}. 


We claim that ${\bf LFT}(\T,\K)\in\bhf\Leftrightarrow {\bf LFT}(\T_O,\K)\in\bhf.$ Here follows the proof. 
${\bf LFT}(\T,\K)={\bf LFT}(\T_I\otimes\T_O,\K)={\bf LFT}(\T_I,{\bf LFT}(\T_O,\K))\in\bhf$. It follows from Theorem \ref{redh} that ${\bf LFT}(\T_O,\K)\in\bhf$. Conversely, let ${\bf LFT}(\T_O,\K)\in\bhf$ be a controller for the inner system $\T_I$. Then, we have from Theorem \ref{redh} that ${\bf LFT}(\T_I,{\bf LFT}(\T_O,\K))\in\bhf$. But this is equivalent with  ${\bf LFT}(\T,\K)\in\bhf$, since $\T_I\otimes\T_O=\T$. The claim is completely proven.

Therefore, it is enough to find the the class of controllers for $\T_O$. Further, it is easy to show that $\T_O$ in \eqref{tito} satisfies the assumptions {\bf ({A}$_1$)} and {\bf ({A}$_2$)} for the one--block problem. Compute {\bf C}$_1$ in \eqref{1blocctrl} for $\T_O$ to get {\bf C}$_2$ in \eqref{2blocctrl}. 
\end{proof}

The next result follows by duality from Proposition \ref{2bloc}. Consider $\Sig_o$ given in \eqref{sigo}. 
\begin{prop}
\label{2blocdual}
Assume $p_1=m_2$,  {\bf (A$_1$)},  {\bf (H$_3$)}, and that DDTARE($\Sig_o$) has a stabilizing solution $Y=Y^*\le0$. Then the dual two--block problem has a solution. Moreover, the class of all controllers is ${\normalfont\K=\textbf{LFT}(\textbf{C}_3,\Q)}$, where $\Q\in\bhf$ is arbitrary, and $\textbf{C}_3$ is given in \eqref{ctrl_2b}.
\end{prop}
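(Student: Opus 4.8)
The plan is to deduce Proposition~\ref{2blocdual} from Proposition~\ref{2bloc} by a purely realization--based adjoint (duality) argument, so that no new analysis is needed. First I would introduce the \emph{dual system} $\T_d$, obtained by conjugate--transposing the realization \eqref{sspace},
$$\T_d(z)=\ba{c|cc}A^*-zE^* & C_1^* & C_2^*\\ \hline B_1^* & 0 & D_{21}^*\\ B_2^* & D_{12}^* & 0\ea_{z_0},$$
and I would observe that, reading off the blocks, this already realizes the LFT--dual, i.e. $\T_{d,11}=\T_{11}^*$, $\T_{d,12}=\T_{21}^*$, $\T_{d,21}=\T_{12}^*$, $\T_{d,22}=\T_{22}^*$, with $\tilde p_1=m_1,\ \tilde p_2=m_2,\ \tilde m_1=p_1,\ \tilde m_2=p_2$. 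Since $\Lam(A^*-zE^*)=\overline{\Lam(A-zE)}$ and $\Di$ is invariant under conjugation, while $\sigma_{max}$ is unchanged under conjugate transposition, the adjoint preserves both stability and the $\hf$--norm; hence a TFM lies in $\bhf$ iff its adjoint does. Moreover the adjoint commutes with the interconnection, $\textbf{LFT}(\T,\K)^*=\textbf{LFT}(\T_d,\K^*)$, so $\K$ (with $\K:y_2\mapsto u_2$) solves the dual two--block problem for $\T$ iff $\K^*$ solves the primal two--block problem for $\T_d$.

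Next I would check that the standing hypotheses transport correctly. The assumption $p_1=m_2$ for $\T$ becomes $\tilde p_2=\tilde m_1$ for $\T_d$, as required in Proposition~\ref{2bloc}. Assumption {\bf (A$_1$)} on $\T$ (with $D_{12}$ square invertible and $\T_{12}$ having only stable zeros) turns into {\bf (A$_2$)} for $\T_d$, because $\tilde D_{21}=D_{12}^*$ is invertible and the pencil $A^*-zE^*-C_1^*(D_{12}^*)^{-1}B_2^*(\al-\beta z)$ is the conjugate of $A-zE-B_2D_{12}^{-1}C_1(\al-\beta z)$, hence has its spectrum in $\Di$; dually, {\bf (H$_3$)} for $\T$ becomes {\bf (H$_2$)} for $\T_d$, since the rank pencil \eqref{zeros22} is carried, up to conjugate transposition and the nonzero scalar $(\al-\beta\ejt)$, into the pencil \eqref{zeros11} written for $\T_d$. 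The crucial bookkeeping step is to verify that $\Sig_o$ in \eqref{sigo} is exactly the $\Sig_c$ of $\T_d$: substituting $\tilde C_1=B_1^*,\ \tilde B_1=C_1^*,\ \tilde D_{12}=D_{21}^*,\ \tilde m_1=p_1$ into the definition in Box~\ref{mainres} yields $\tilde C_1^*\tilde C_1=B_1B_1^*$, $\tilde C_1^*\tilde D_{12}=B_1D_{21}^*$, and $\tilde D_{12}^*\tilde D_{12}=D_{21}D_{21}^*$, which is precisely \eqref{sigo}. Consequently the assumed stabilizing solution $Y=Y^*\le0$ of DDTARE($\Sig_o$) is exactly the stabilizing solution $X=X^*\le0$ of DDTARE($\Sig_c$) for $\T_d$ demanded by Proposition~\ref{2bloc}.

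With these identifications, Proposition~\ref{2bloc} applies verbatim to $\T_d$: the two--block problem for $\T_d$ is solvable and its entire solution set is $\K^*=\textbf{LFT}(\textbf{C}_2(\T_d),\Q_d)$ with $\Q_d\in\bhf$ arbitrary, where $\textbf{C}_2(\T_d)$ is \eqref{2blocctrl} written with the dual data. Re--applying the adjoint and setting $\Q:=\Q_d^*\in\bhf$ (the adjoint is an involution on $\bhf$, so $\Q$ still ranges over all of $\bhf$) gives $\K=\textbf{LFT}\big(\textbf{C}_2(\T_d)^*,\Q\big)$. It then remains to compute the adjoint realization $\textbf{C}_2(\T_d)^*$ and match it against $\textbf{C}_3$ in \eqref{ctrl_2b}.

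I expect this last identification to be the main obstacle: it is the only genuinely computational step. One must show that, under the adjoint, the feedback $F_2$ and the term $-D_{21}^{-1}C_2-B_1^*X(\al E-\beta A)$ of \eqref{2blocctrl} dualize into the gain $H_2=-(B_1D_{21}^*+(\al E-\beta A)YC_2^*)(D_{21}D_{21}^*)^{-1}$ and the output injection $-D_{12}^{-1}C_1$ of \eqref{ctrl_2b}, that the weight $(D_{12}^*D_{12})^{-1/2}$ becomes $(D_{21}D_{21}^*)^{-1/2}$, and that the third input column $B_2(D_{12}^*D_{12})^{-1/2}$ produces the entry $-B_2D_{12}^{-1}-(\al E-\beta A)YC_1^*$. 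Care is needed with the placement of the scalar factor $(\al-\beta z)$ and of $(\al E-\beta A)$ under conjugation, and with confirming that the center $z_0\in\pDi$ is preserved by the adjoint (which holds since $\bar z_0=z_0^{-1}$ for $z_0=\ap$). Apart from this tedious but routine verification, the result follows entirely by duality.
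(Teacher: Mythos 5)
Your proposal is correct and takes essentially the same route as the paper: the paper's entire proof of this proposition is the single assertion that it ``follows by duality from Proposition~\ref{2bloc}'', and your argument is exactly that duality made explicit (adjoint system $\T_d$, transport of $p_1=m_2$, {\bf (A$_1$)}$\to${\bf (A$_2$)}, {\bf (H$_3$)}$\to${\bf (H$_2$)}, the identification $\Sig_o=\Sig_c(\T_d)$ so that $Y$ plays the role of $X$, and dualization of $\textbf{C}_2$ into $\textbf{C}_3$). The one imprecision, which you yourself half-flag at the end, is that the adjoint $z\mapsto\T(\bar z)^*$ yields a realization centered at $\bar z_0=1/z_0\in\pDi$ with scalar factor $(\bar\al-\al z)$ (i.e., $\al_d=\bar\al$), not at $z_0$ with $(\al-\beta z)$ as in your displayed $\T_d$; with that convention the closed-loop pencil in {\bf (A$_2$)} for $\T_d$ is genuinely the conjugate transpose of the one in {\bf (A$_1$)}, and the argument goes through unchanged.
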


\begin{proof} {\bf (Theorem \ref{mainthm})} We assume that {\bf (H$_1$)}, {\bf (H$_2$)}, and {\bf (H$_3$)} hold. Suppose that DDTARE($\Sig_c$) has a stabilizing solution $X=X^*\le0$. Consider now the systems $\T_I$ and $\T_O$, given in \eqref{tito}. We have shown that it is enough to find the the class of controllers for $\T_O$. It is easy to check that, in this case, $\T_O$ satisfies {\bf (A$_2$)}. Write now $\Sig_o$ in \eqref{sigo} and DDTARE($\Sig_o$) for $\T_O$ to obtain $\Sig_\x$ in Box \ref{mainres} and DDTARE($\Sig_\x$). Further, assume that DDTARE($\Sig_\x$) has a stabilizing solution $Z=Z^*\le0$. Therefore, $\T_O$ satisfies the assumptions in Proposition \ref{2blocdual}. The parametrization of all controllers that solve the $\hf$ control problem in Theorem \ref{mainthm} is now a consequence of Proposition \ref{2blocdual} and some straightforward manipulations. This completes whole the proof. 
\end{proof}




\vspace{-5mm}
\bibliographystyle{IEEEtran}
\bibliography{bib.bib}

\end{document}